\renewcommand*{\backref}[1]{}
\renewcommand*{\backrefalt}[4]{\ifcase #1\or(p.~#2.)\else(pp.~#2.)\fi}
\renewcommand*{\backreftwosep}[1]{ and }
\renewcommand*{\backreflastsep}[1]{, and }
\newcommand{\coh}{coherent\xspace}
\newcommand{\Sdcc}{Strongly discrete coherent context\xspace}
\newcommand{\Sdic}{Simple division context\xspace}
\newtheorem{theorem}{Theorem}[section]
\newtheorem{maintheorem}[theorem]{Fundamental theorem}
\newtheorem{proposition}[theorem]{Proposition}
\newtheorem{corollary}[theorem]{Corollary}
\theoremstyle{remark}
\newtheorem{comment}[theorem]{Comment}
\newtheorem{remark}[theorem]{Remark}
\newtheorem{example}[theorem]{Example}
\theoremstyle{definition}
\newtheorem{definition}[theorem]{Definition}
\newtheorem{gencontext}[theorem]{General context}
\newtheorem{discohcontext}[theorem]{Discrete coherent context}
\newtheorem{sdccontext}[theorem]{Strongly discrete coherent context}
\newtheorem{simpledivicontext}[theorem]{Simple division context}
\newtheorem{notation}[theorem]{Notation}
\newtheorem{divalgorithm}[theorem]{Division algorithm}
\newtheorem{slistalgorithm}[theorem]{S-list algorithm}
\newtheorem{buchbalgorithm}[theorem]{Buchberger's algorithm}
\newtheorem{syzalgorithm}[theorem]{Syzygy algorithm for terms}
\newtheorem{basicsyzalgorithm}[theorem]{Basic syzygy algorithm for terms}
\newtheorem{rewritealgorithm}[theorem]{Rewriting algorithm}
\newtheorem{Syzalgorithm}[theorem]{Schreyer's syzygy algorithm}
\newcommand{\so}[1]{\left\{#1\right\}}
\newcommand{\sotq}[2]{\{\,#1\mathrel;\allowbreak#2\,\}}
\newcommand \som {\sum\nolimits}
\newcommand \St{\mathscr{S}}
\newcommand \n{\noindent}
\DeclareMathOperator{\LT}{LT}
\DeclareMathOperator{\MLT}{MLT}
\DeclareMathOperator{\LM}{LM}
\DeclareMathOperator{\LC}{LC}
\DeclareMathOperator{\LP}{LPos}
\DeclareMathOperator{\Syz}{Syz}
\DeclareMathOperator{\List}{List}
\DeclareMathOperator{\mdeg}{mdeg}
\DeclareMathOperator{\tdeg}{tdeg}
\DeclareMathOperator{\Ann}{Ann}
\DeclareMathOperator{\lcm}{lcm}
\newcommand\aLM{L}
\newcommand{\rs}{\mathrm{s}}
\newcommand \NN{\mathbb{N}}
\newcommand \ZZ{\mathbb{Z}}
\newcommand \QQ{\mathbb{Q}}
\newcommand \uX  {\underline{X}}
\newcommand \RuX  {\R[\uX]}
\newcommand \ug  {\underline{g}}
\newcommand \ugprime  {\underline{g'}}
\newcommand \ugseconde  {\underline{g''}}
\newcommand{\Pp}{\mathscr{P}_{\!p}}
\newcommand{\Pap}{\mathscr{P}}
\newcommand \eoe {\hbox{}\nobreak\hfill
  \vrule width .5em height .5em depth 0mm \par \smallskip}
\newcommand \lst[1]{#1}
\newcommand \gen[1]{\langle{#1}\rangle}
\newcommand \geN[1]{\big\langle{#1}\big\rangle}
\newcommand \gentq[2]{\gen{\,#1\mathrel;#2\,}}
\newcommand \F {\mathbf{F}}
\newcommand \R {\mathbf{R}}
\newcommand \V {\mathbf{V}}
\newcommand \Hn {\mathbf{H}_n}
\newcommand \Hm {\Hn^m}
\newcommand \noi {\noindent}
\newcommand \sms {\smallskip}
\newcommand \sni {\sms\noi}
\newcommand \bs {\bigskip}
\newcommand \bni {\bs\noi}
\DeclareSymbolFont{stix-largesymbols}  {LS2}{stixex}   {m} {n}
\DeclareMathDelimiter{\stixlBrack} {\mathopen} {stix-largesymbols}{"E0}{stix-largesymbols}{"06}
\DeclareMathDelimiter{\stixrBrack} {\mathclose}{stix-largesymbols}{"E1}{stix-largesymbols}{"07}
\newcommand\lrb[1]{[ #1 ]}
\newcommand{\twodots}{\mathinner {\ldotp \ldotp}}
\providecommand{\divides}{\mathbin|}
\setlist[itemize, 1]{wide}
\setlist[enumerate, 1]{wide}
\setlist[enumerate, 2]{wide,leftmargin = \parindent}%
\newcommand{\X}[1][1]{\ifcase#1\or X\or Y\fi}
\begin{document}

\tolerance 1414
\hbadness 1414
\emergencystretch 1.5em
\hfuzz 0.3pt
\widowpenalty=10000
\vfuzz \hfuzz
\raggedbottom

\title{Generalised Buchberger and  Schreyer algorithms for strongly discrete coherent rings}
\author[*]{Henri Lombardi}
\author[*]{Stefan Neuwirth}
\author[**]{Ihsen Yengui}
\affil[*]{Université de Franche-Comté, CNRS, UMR 6623, LmB, 25000 Besançon, France, \url{henri.lombardi@univ-fcomte.fr}, \url{stefan.neuwirth@univ-fcomte.fr}.}
\affil[**]{Département de mathématiques, Faculté des sciences, Université de Sfax, 3000 Sfax, Tunisia, \url{ihsen.yengui@fss.rnu.tn}.}
\date{}
\maketitle

\begin{abstract}
  Let \(M\)~be a finitely generated submodule of a free module over a multivariate polynomial ring with coefficients in a discrete coherent ring.
  We prove that its module~\(\MLT(M)\) of leading terms is countably generated and provide an algorithm for computing explicitly a generating set. This result is also useful when \(\MLT(M)\) is not finitely generated.
  
  Suppose that the base ring is strongly discrete coherent. We provide a Buchberger-like algorithm and prove that it converges if, and only if, the module of leading terms is finitely generated. We also provide a constructive version of Hilbert's syzygy theorem by following \foreignlanguage{german}{Schreyer}'s method.
\end{abstract}

\bni MSC 2020: 
13D02, 
13P10, 
13C10, 
13P20, 
14Q20. 

\sni Keywords: Generalised \foreignlanguage{german}{Buchberger} algorithm, syzygy theorem, free resolution, monomial order, \foreignlanguage{german}{Schreyer}'s monomial order, \foreignlanguage{german}{Schreyer}'s syzygy algorithm.


\tableofcontents

\section{Introduction}\label{Introduction}

This paper is written in Bishop's style of constructive mathematics (see~\citealt*{Bi67,BB85,MRR,ACMC,Y5}).
It can be seen as a sequel to the paper~\citealt*{GLNY2020} \citep[see also][]{HY,Ye2}. 
It generalises results in \citealt*{AL} to suitable nonnoetherian contexts.

\smallskip Our general context is the following.

\begin{gencontext}\label{gco}
  In this article, $\R$ is a commutative ring with unit, $X_{1},\dots,X_{n}$ are $n$~indeterminates ($n\geq 1$), $\RuX=\R[X_{1},\dots,X_{n}]$, $\Hm=\RuX^m$ is a free $\RuX$-module with basis~$(e_{1},\dots,e_{m})$ ($m\geq 1$), and $>$~is a monomial order on~$\Hm$ (see Definition~\ref{monomial-module}). The case of an ideal in~$\RuX$ is addressed by considering~$m=1$: $\Hn^1=\RuX$ and $e_1$ is the unit of~$\R$.
\end{gencontext}

We shall need to qualify this context by suitable hypotheses of coherence and discreteness (see Definition~\ref{defdef}).
Let us consider a finitely generated submodule~$M$ of~$\Hm$ and its module~$\MLT(M)$ of leading terms with respect to our monomial order. 

Our first Fundamental theorem~\ref{Grob-Buch} states that $\MLT(M)$ is countably generated and provides an algorithm for computing explicitly a generating set. This result is also useful when $\MLT(M)$ is not finitely generated.

The second Fundamental theorem~\ref{Grob-Buch-algo} indicates a precise context in which the (generalised) Buchberger criterion applies and Buchberger's algorithm computes a Gröbner basis for~\(M\).

The third Fundamental theorem~\ref{schcoherent}
indicates a precise context in which the (generalised) \foreignlanguage{german}{Schreyer} method computes a finite free resolution for~$M$.

These are central problems since polynomials or vectors of polynomials always admit 
nontrivial algebraic relations and such computations are a basic step, for instance, 
for computing resolutions of modules.

The main results of the paper are two generalisations of classical theorems/algorithms,
Buchberger's algorithm for the computation of a Gröbner basis and Schreyer's algorithm for the computation
of a Gröbner bases of the first syzygy module. The generalisations are given for so-called
strongly-discrete coherent rings, namely rings with membership test for finitely generated
ideals and with finitely generated syzygy modules.

An important aspect of extending classical theorems in the theory of Gröbner bases from polynomial rings over fields to polynomial rings over more general algebraic structures is the emergence of more elementary arguments.

\section{Constructive definitions and contexts}

We start with recalling the following constructive definitions.

\begin{definition}\label{defdef}
  \leavevmode
  \begin{itemize}

  \item The ring~\(\R\)  is \emph{discrete} if it is equipped with a zero test: equality is decidable.

  \item Let $U$ be an $\R$-module.
    The \emph{syzygy} module of an $n$-tuple $(v_1,\dots,v_n)\in U^n$ is
    \[
      \Syz(v_1,\dots,v_n):= \sotq{(b_1,\dots,b_n) \in \R^{1\times n}}{b_1v_1+\cdots+b_nv_n=0}\text.
    \]
    The syzygy module of a 1-tuple~$v$ is the \emph{annihilator}~$\Ann(v)$ of~$v$.
  \item An $\R$-module~$U$ is \emph{coherent} if the syzygy module of
    every $n$-tuple of elements of~$U$ is finitely generated, i.e.\ if
    there is an algorithm providing a finite system of generators for
    the syzygies, and an algorithm that represents each syzygy as a
    linear combination of the generators. The ring~$\R$ is
    \emph{coherent} if it is coherent as an~$\R$-module. It is
    well-known that a module is coherent if, and only if, on the one hand any
    intersection of two finitely generated submodules is finitely
    generated, and on the other hand the annihilator of every element is
    a finitely generated ideal.
  \item A ring is \emph{strongly discrete} if it is equipped with a
    membership test for finitely generated ideals, i.e.\ if, given
    $a,b_1,\dots,b_n \in \R$, one can answer the question
    $a\mathrel{\in?}\gen{b_1,\dots,b_n}$ and, in the case of a positive
    answer, one can explicitly provide $c_1,\dots,c_n \in \R$ such that
    $a=b_1c_1+\cdots +b_nc_n$.
  \item $\R$ is a \emph{Bézout ring} if every
    finitely generated ideal is  principal, i.e.\ of the form $\gen a=\R a$ with $a \in \R$. A Bézout ring is strongly discrete if, and only if, it is equipped with a divisibility test; it is coherent if, and only if, the annihilator of any element is principal. To be a valuation ring (in the Kaplansky sense) is to be a Bézout local ring \citep[see][Lemma IV-7.1]{ACMC}.

  \item A Bézout ring~$\R$ is \emph{strict} if for all $b_1, b_2 \in\R$ we can find
    $d, b'_{1} , b'_{2} , c_1, c_2 \in\R$ such that $b_1 = d b'_{1}$, $b_2 = d b'_{2}$, and $c_1 b'_{1} + c_2 b'_{2} = 1$. Valuation rings and Bézout domains are strict Bézout rings; a quotient or a localisation of a strict Bézout ring is again a strict Bézout ring \citep[see][Exercise~IV-7 pp.~220--221, solution pp.~227--228]{ACMC}. A zero-dimensional Bézout ring is strict (because it is a ``Smith ring'', see \citealp*[Exercice~XVI-9 p.~355, solution p.~526]{DLQ2014}, and \citealp[Exercise~IV-8 pp.~221-222, solution p.~228]{ACMC}).
  \end{itemize}
\end{definition}

We shall consider three contexts that are more specific than General context~\ref{gco}.

\begin{discohcontext} \label{discohcontext} General context~\ref{gco} with~$\R$ discrete and coherent.
\end{discohcontext}

\begin{simpledivicontext} \label{simpledivicontext}
  General context~\ref{gco} with~$\R$ strongly discrete. 
\end{simpledivicontext}

\begin{remark} \label{remdivicontext}
  We could have considered a ``Division with remainder context'' in which we would assume moreover that we have a partial preorder~$\leq_\R$ on elements of~$\R$
  (with $a\leq_\R b$ if~$a=b$) and a generalised division algorithm~\textsf{Rem} for~$\R$ which computes, for given~$c,c_1,\dots,c_k\in\R$, a remainder $r_0=c-a_1 c_1-\dots-a_kc_k$ satisfying~$r_0=0$ if, and only if, $c\in\gen{c_1,\dots,c_k}$, and $r_0,a_1c_1,\dots,a_kc_k\leq_\R c$ otherwise. \Sdic~\ref{simpledivicontext} might be seen as the particular case where $\leq_\R$~is equality and \textsf{Rem} returns~\(c\) if $c\notin\gen{c_1,\dots,c_k}$. 
\end{remark}

\begin{sdccontext} \label{sdccontext} General context~\ref{gco} with~$\R$ strongly discrete and coherent.
\end{sdccontext}


\section{Gröbner bases for modules over a discrete ring}\label{s1}

\begin{definition}[Monomial orders on finite-rank free {$\RuX$}-modules, \protect{\citealp*[see][]{AL,coxlittleoshea05,Ye8}}, General context~\ref{gco}]\label{monomial-module}
  \leavevmode%
  \begin{enumerate}
  \item\label{monomial-module-1} Monomials, terms.
    \begin{itemize}
    \item   A \emph{monomial} in~$\Hm$ is a vector of the form~$M=\uX^\alpha e_i$ ($1 \leq i \leq m$), where $\uX^\alpha=X_1^{\alpha_1}\cdots X_n^{\alpha_n}$ is a monomial in~$\RuX$;
      the index~$i$ is the \emph{position} of the monomial. The set of monomials in~$\Hm$ is denoted by~$\mathbb{M}_n^m$, with $\mathbb{M}_n^1 \cong \mathbb{M}_n$ (the set of monomials in~$\RuX$). For example, $X_1X_2^3e_2$ is a monomial in~$\Hm$, but~$2X_1e_3$, $(X_1+X_2^3)e_2$, and~$X_1e_2+X_1e_3$ are not.

    \item   If $M=\uX^\alpha e_i$ and $N=\uX^\beta e_j$, we say that $M$ \emph{divides}~$N$ if~$i=j$ and $\uX^\alpha$ divides~$\uX^\beta$. For example, $X_1e_1$ divides~$X_1X_2e_1$, but does not divide~$X_1X_2e_2$. Note that in the case that $M$ divides~$N$, there exists a monomial~$\uX^\gamma$ in~$\mathbb{M}_n$ such that~$N=\uX^\gamma M$: in this case we define
      \(
      N/M\coloneqq \uX^\gamma
      \);
      for example, $(X_1X_2e_1)/(X_1e_1)=X_2$.
    \item A \emph{term} in~$\Hm$ is a vector of the form~$c M$, where $c \in \R \setminus \so{ 0 }$ and $M \in \mathbb{M}_n^m$. We say that a term~$c M$ \emph{divides} a term~$c' M'$, with~$c,c' \in \R \setminus \so{0}$ and~$M,M' \in \mathbb{M}_n^m$, if $c$ divides~$c'$ and $M$ divides~$M'$.

    \end{itemize}

  \item A \emph{monomial order} on~$\Hm$ is a
    relation~$>$ on~$\mathbb{M}_n^m$ such that
    \begin{itemize}
    \item $>$ is  a total order on~$\mathbb{M}_n^m$;
    \item $ \uX^\alpha M > M$ for all~$M \in
      \mathbb{M}_n^m$ and $ \uX^\alpha \in  \mathbb{M}_n \setminus \so{1}$;
    \item $M>  N \implies \uX^\alpha M > \uX^\alpha N $ for all $M,N \in
      \mathbb{M}_n^m$ and $ \uX^\alpha \in  \mathbb{M}_n$.
    \end{itemize}
    Note that, when specialised to the case $m=1$, this definition coincides with the definition of a monomial order on $\RuX$.

  \item Let the ring $\R$ be discrete. Any nonzero vector $u \in \Hm$ can be written as a sum of terms
      \[
        u=c_tM_t+c_{t-1}M_{t-1}+\dots+c_1M_1
      \]
      with $c_1,\dots,c_t \in \R \setminus \so{0}$, $M_1,\dots,M_t \in \mathbb{M}_n^m$, and $M_t > M_{t-1} > \cdots > M_1$.
    \begin{itemize}
    \item We define the \emph{leading coefficient}, \emph{leading monomial}, and \emph{leading term} of $u$ as in the ring case: $\LC (u)=c_t$, $\LM (u)=M_t$, $\LT (u)=\LC (u)\LM (u)$.
    \item   Letting $M_t=\uX^\alpha e_\ell$ with $\uX^\alpha\in \mathbb{M}_n$ and $1\leq \ell \leq m$, we say that $\alpha$ is the \emph{multidegree of $u$} and write $\mdeg(u)=\alpha$, and that
      the index~$\ell$ is the \emph{leading position} of~$u$, and write $\LP(u)=\ell$.

    \item  We stipulate that $\LC (0)=0$, $\LM (0)=0$, and $\mdeg(0)=-\infty$, but we do not define $\LP(0)$.

    \end{itemize}

  \item A monomial order on
    $\RuX$ gives rise to the two
    following canonical
    monomial orders on~$\Hm$.
    Let us consider monomials $M=\uX^\alpha e_i$ and $N=\uX^\beta e_j \in
    \mathbb{M}_n^m$. 
    \begin{itemize}
    \item 
      We say that
      \[
        M>_{\mathrm{TOP}}N \quad \text{if}\quad\left|\,
          \begin{aligned}
            \text{either }&\uX^\alpha > \uX^\beta \\
            \text{or both }&\uX^\alpha = \uX^\beta\text{ and $i<j$.}
          \end{aligned}
        \right.
      \]
      This monomial order is called \emph{term over position} (TOP) because it
      gives precedence to the monomial order on $\RuX$
      over the monomial position. For example, when  $X_2>X_1$,
      we have
      \[
        X_2e_1>_{\mathrm{TOP}} X_2e_2 >_{\mathrm{TOP}} X_1e_1 >_{\mathrm{TOP}} X_1e_2\text.
      \]

    \item  
      We say that
      \[
        M>_{\mathrm{POT}}N \quad \text{if}\quad\left|\,
          \begin{aligned}
            \text{either }&i<j\\
            \text{or both }&i=j\text{ and $\uX^\alpha > \uX^\beta$.}
          \end{aligned}
        \right.
      \]
      This monomial order is called \emph{position over term} (POT) because it
      gives precedence to the monomial position over the
      monomial order on $\RuX$. For example, when
      $X_2>X_1$, we have
      \[
        X_2e_1 >_{\mathrm{POT}} X_1e_1 >_{\mathrm{POT}}X_2e_2 >_{\mathrm{POT}} X_1e_2\text.
      \]
    \end{itemize}
  \end{enumerate}
\end{definition}

\begin{definition}[list and module of leading terms, Gröbner bases]\label{defiGrob0}
  Let $\R$ be a discrete ring and consider a list $G=g_{1},\dots,g_{p}$ in $\Hm$. We denote by \(\LT(G)=\LT (g_{1}),\dots,\LT(g_{p})\) the list of its leading terms. Suppose now that the \(g_i\)'s are nonzero and consider the finitely generated submodule $U= \gen{G}=\R[\uX] g_{1}+\cdots+\R[\uX] g_{p}$  of $\Hm$.
  \begin{enumerate}
  \item The \emph{module of leading terms} of~$U$ is $\MLT (U)\coloneqq\gentq{\LT(u)}{u \in U}$.

  \item $G$ is a \emph{Gröbner basis} for $U$ if $\MLT(U)= \gen{\LT (G)}$.
  \end{enumerate}
\end{definition}

\smallskip 

The following proposition comes from \citealt*{GLNY2020}. We give it as a motivating example showing that the ``obvious'' syzygies do not suffice to generate all of them.
\begin{proposition}\label{lemmsyz0}
  Let \(\R\) be a  strict Bézout ring,  and \(a_1,\dots,a_{s} \in \R\setminus\{0\}\). Denote by $(\epsilon_1,\dots,\epsilon_s)$ the canonical basis
  of \(\R^s\). For $j\ne i$, write \(
  a_j=d_{i,j}a_{i,j}
  \)
  with $d_{i,j}=\gcd(a_i,a_j)$.  Then
  $\Syz(a_1,\dots,a_{s})$ is generated by the $\tbinom s2$ vectors $a_{i,j}\epsilon_i-a_{j,i}\epsilon_j$ with $i <j$,\footnote{These are the \emph{obvious} syzygies.} together with all the~$z \epsilon_i$
  with~$z \in \Ann(a_i)$. In particular, \(\R\) is coherent if and only if $\Ann(a)$ is finitely generated (and thus can be generated by
  just one element) for any
  $a \in \R$. In that case,
  letting $\Ann(a_k)=\gen{b_k} $ for $1 \leq k \leq s$, we have:

  \[\Syz(a_1,\dots,a_{s}) = \gentq{a_{i,j}\epsilon_i-a_{j,i}\epsilon_j,b_{k}\epsilon_k}{1 \leq i < j \leq s, 1\le k\le s}\text.\]

\end{proposition}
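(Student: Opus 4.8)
The plan is to prove the containment ``\(\supseteq\)'' (every listed vector is indeed a syzygy) by direct verification, and then the harder containment ``\(\subseteq\)'' (every syzygy is a combination of the listed ones) by an induction on the support of a given syzygy, using the strict Bézout property to reduce two nonzero coordinates to one at a time.

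First I would dispatch the easy direction. For \(i<j\), the vector \(a_{i,j}\epsilon_i-a_{j,i}\epsilon_j\) pairs against \((a_1,\dots,a_s)\) to give \(a_{i,j}a_i-a_{j,i}a_j\); writing \(a_i=d_{i,j}a_{j,i}\) and \(a_j=d_{i,j}a_{i,j}\) (note the cross indices, as in the statement: \(a_i = d_{i,j}a_{i,j}\) would be wrong, one must keep track that \(d_{i,j}\mid a_i\) and \(d_{i,j}\mid a_j\) and the complementary factors are \(a_{i,j}\), \(a_{j,i}\) respectively) this is \(d_{i,j}(a_{i,j}a_{j,i}-a_{j,i}a_{i,j})=0\). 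And \(z\epsilon_i\) with \(z\in\Ann(a_i)\) pairs to \(za_i=0\) by definition. So the right-hand side is contained in \(\Syz(a_1,\dots,a_s)\).

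The main work is the reverse inclusion, and the main obstacle is handling the possible zero divisors cleanly in a constructive way. I would argue by induction on the number \(k\) of nonzero entries of a syzygy \(b=(b_1,\dots,b_s)\). If \(k\le 1\) then \(b=b_i\epsilon_i\) with \(b_ia_i=0\), so \(b_i\in\Ann(a_i)\) and \(b\) is of the listed type. For the inductive step, pick two indices \(i<j\) with \(b_i,b_j\) both nonzero. Using strictness of the Bézout ring applied to \(a_i,a_j\): there is a common divisor \(d\) with \(a_i=d b'_1\), \(a_j=d b'_2\) and a unimodular combination \(c_1b'_1+c_2b'_2=1\). A standard computation identifies \(d\) with (an associate of) \(\gcd(a_i,a_j)=d_{i,j}\) and \(b'_1,b'_2\) with \(a_{j,i},a_{i,j}\), so the ``obvious'' syzygy is \(b'_1\epsilon_j - b'_2\epsilon_i\) up to sign (again the index bookkeeping must be done carefully). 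The key algebraic move: from \(b_ia_i+b_ja_j=0\), i.e.\ \(d(b_ib'_1+b_jb'_2)=0\), one wants to subtract a suitable multiple \(\lambda(a_{i,j}\epsilon_i-a_{j,i}\epsilon_j)\) of the obvious syzygy to kill \(b_j\) (say), i.e.\ choose \(\lambda=b_j/b'_2\)-type correction — but divisibility of \(b_j\) by \(b'_2\) is not automatic. The honest route is: set \(e:=c_1b_j b'_1 - c_2 b_i b'_2\) or rather observe that after subtracting \(\lambda\) times the obvious syzygy with \(\lambda := c_1 b_i + c_2 b_j\)-adjusted coefficient, the \(i\)-th and \(j\)-th entries become \(b_i - \lambda b'_2\) and \(b_j + \lambda b'_1\), and one checks that with \(\lambda = c_2 b_i - c_1 b_j\) (or the appropriate sign) one of these vanishes while the sum of the two rows, \(a_i(b_i-\lambda b'_2)+a_j(b_j+\lambda b'_1)\), still equals \(0\) since \(a_i b'_2 = a_j b'_1\) (both equal \(b'_1 b'_2 d\)). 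Simultaneously \(d_{i,j}\) times the surviving entry times its \(a\)-coefficient is still a syzygy relation, which puts us in the situation where, after absorbing one more step involving \(\Ann(d_{i,j})\subseteq\Ann\) of a suitable element, the new syzygy has strictly smaller support; then invoke the induction hypothesis. I expect the delicate point to be verifying that the ``residual'' relation landing on a single coordinate really does land in \(\Ann(a_{\text{that coordinate}})\) and not merely in some larger annihilator, which is exactly where strictness (the unimodular pair \(c_1b'_1+c_2b'_2=1\)) is used to transfer annihilator information.

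Finally, the ``in particular'' clause is immediate: the displayed generating set is finite precisely when each \(\Ann(a_k)\) is finitely generated, and since \(\R\) is Bézout, a finitely generated annihilator \(\Ann(a_k)=\gen{b_{k,1},\dots,b_{k,r}}\) is in fact principal, \(\Ann(a_k)=\gen{b_k}\); and coherence of \(\R\) (as an \(\R\)-module) means exactly that every such syzygy module is finitely generated, which by the first part forces each \(\Ann(a_k)\) to be finitely generated — so the stated equivalence and the final displayed formula follow. I would remark that the whole argument is constructive: strictness is given by an explicit algorithm producing \(d,b'_1,b'_2,c_1,c_2\), the gcd and cofactors \(d_{i,j},a_{i,j}\) are computed from it, and the support-reduction is effective, so one genuinely obtains an algorithm expressing an arbitrary syzygy in terms of the generators, as required by the definition of coherence in Definition~\ref{defdef}.
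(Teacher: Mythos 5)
Your easy inclusion, your handling of a syzygy supported on exactly two coordinates, and the ``in particular'' clause are all essentially correct; in particular, with $a_i=db'_1$, $a_j=db'_2$, $c_1b'_1+c_2b'_2=1$ and $\lambda=c_2b_i-c_1b_j$, one indeed gets $b_i\epsilon_i+b_j\epsilon_j=\lambda(b'_2\epsilon_i-b'_1\epsilon_j)+c_1e\,\epsilon_i+c_2e\,\epsilon_j$ with $e=b_ib'_1+b_jb'_2$, and \emph{when} $b_ia_i+b_ja_j=0$ this gives $de=0$, hence $c_1e\in\Ann(a_i)$ and $c_2e\in\Ann(a_j)$. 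The gap is in the inductive step. You invoke ``from $b_ia_i+b_ja_j=0$'', but when the syzygy has three or more nonzero entries this relation does not hold for the chosen pair: one only has $b_ia_i+b_ja_j=-\sum_{k\ne i,j}b_ka_k$. Then $e$ need not annihilate $d$, the residual entries $c_1e$ and $c_2e$ need neither vanish nor lie in the relevant annihilators, and subtracting $\lambda\,\rs(a_i,a_j)$ does not reduce the support. More fundamentally, even granting the conclusion of the proposition, a syzygy of support $k\geq 3$ is a combination of \emph{several} overlapping two-support generators, and there is no reason any single multiple of one generator can be peeled off so as to strictly shrink the support; so induction on the size of the support is structurally the wrong induction, and your sketch supplies no mechanism for producing the required combination.

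The paper's proof works around exactly this obstacle by a different, global device: for each permutation $i_1,\dots,i_s$ it multiplies the given syzygy by the product $a_{i_1,i_2}\cdots a_{i_{s-1},i_s}$, which makes the chain of exact substitutions $a_{i_k,i_{k+1}}\epsilon_{i_k}=\rs(a_{i_k},a_{i_{k+1}})+a_{i_{k+1},i_k}\epsilon_{i_{k+1}}$ available, pushing everything onto $\epsilon_{i_s}$ with a residual coefficient that must lie in $\Ann(a_{i_s})$; it then recovers the original syzygy from these multiples via a Bézout identity among the products over all permutations, obtained by expanding the $\binom{s}{2}$ identities $\gcd(a_{i,j},a_{j,i})=1$ and using that every tournament contains a Hamiltonian path. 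Some such global recombination is needed; your local support-reduction argument does not substitute for it.
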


\begin{proof}
  Let \(c_1\epsilon_1+\dots+c_{s}\epsilon_s \in \Syz(a_1,\dots,a_{s})\), and let $\rs(a_i,a_j):=a_{i,j}\epsilon_i-a_{j,i}\epsilon_j$. 
Note that 
  \hbox{$\gcd(a_{i,j},a_{j,i})=1$}. 
  For each permutation $i_1,\dots,i_{s}$ of $1,\dots,s$, consider the product~\(a_{i_1,i_2}\*\cdots a_{i_{s-1},i_{s}}\). We claim that there is a Bézout identity for these products, so that it suffices to rewrite the expression $a_{i_1,i_2}\*\cdots a_{i_{s-1},i_{s}}\*(c_1\epsilon_1+\cdots+c_{s}\epsilon_{s})$ in terms of \(\rs(a_{i_1},a_{i_{2}}),\dots,\rs(a_{i_{s-1}},a_{i_s})\) and \(\Ann(a_{i_s})\epsilon_{i_s}\): let us
  replace successively
  \[
    \begin{aligned}
      a_{i_1,i_2}&\epsilon_{i_1}&&\text{by}&&\rs(a_{i_1},a_{i_2})+a_{i_2,i_1}\epsilon_{i_2}\text,\\
      \vdots&&&&&\hphantom{\rs(a_{i_1},a_{i_2})}\vdots\\
      a_{i_{s-1},i_{s}}&\epsilon_{i_{s-1}}&&\text{by}&&\rs(a_{i_{s-1}},a_{i_{s}})+a_{i_{s},i_{s-1}}\epsilon_{i_{s}}\text.
    \end{aligned}
  \]
  At the end, the sum will be a linear combination of $\rs(a_{i_1},a_{i_2})$, $\rs(a_{i_2},a_{i_3})$, \dots, $\rs(a_{i_{s-1}},a_{i_{s}})$, and~$\epsilon_{i_{s}}$; let $z$ be the coefficient of $\epsilon_{i_{s}}$ in this combination. As \( c_1\epsilon_1+\dots+c_{s}\epsilon_s \in \Syz(a_1,\dots,a_{s})\), we have $z\epsilon_{i_{s}} \in \Syz(a_1,\dots,a_{s})$, i.e.\ $za_{i_{s}}=0$.

  It remains to obtain the Bézout identity for the products $a_{i_1,i_2}\cdots a_{i_{s-1},i_{s}}$. For this, it is enough to develop the product of the
  $\tbinom s2$~Bézout identities with respect to~$a_{i,j}$ and~$a_{j,i}$,
  $1\leq i<j\leq s$: this yields a sum of products of
  $\tbinom s2$~terms, each of which is either~$a_{i,j}$ or~$a_{j,i}$,
  $1\leq i<j\leq s$, so that it is indexed by the tournaments on the
  vertices~$1,\dots,s$ (i.e.\ the directed graphs with exactly one edge between each two vertices); every such product contains a product of the above
  form~$a_{i_1,i_2}\cdots a_{i_{s-1},i_{s}}$ because every tournament
  contains a hamiltonian path \citep[see][]{redei35}. \end{proof}

\begin{remark}
  The above proof results from an analysis of the following proof in the case where $\R$ is local, which entails in fact the general case.
  Since $\R$ is a
  valuation ring, we may consider a permutation $i_1,\dots,i_{s}$ of $1,\dots,s$ such that $a_{i_s} \divides a_{i_{s-1}} \divides
  \cdots \divides a_{i_1}$. Thus $\rs(a_{i_1},a_{i_2})= \epsilon_{i_1} - a_{i_2,i_1}\epsilon_{i_2}$, \dots, $\rs(a_{i_{s-1}},a_{i_s})=\epsilon_{i_{s-1}}-a_{i_s,i_{s-1}}\epsilon_{i_s}$ for
  some~$a_{i_2,i_1},\dots,a_{i_s,i_{s-1}}$. Then, by replacing successively $\epsilon_{i_k}$ by $\rs(a_{i_k},a_{i_{k+1}})+a_{i_{k+1},i_k}\epsilon_{i_{k+1}}$, the syzygy $(c_1,\dots,c_{s})$ may be rewritten as a linear combination of $\rs(a_{i_1},a_{i_2})$, \dots, $\rs(a_{i_{s-1}},a_{i_s})$, and $\epsilon_{i_s}$, with the coefficient of~$\epsilon_{i_s}$ turning out to lie in $\Ann(a_{i_s})$.
  \eoe
\end{remark}

In the following, we give examples of coherent rings over which syzygy modules are not always generated by
vectors with at most $2$ nonzero components.

\begin{example}\label{2-comp-1}
  Consider the (noetherian) coherent ring $\mathbb{Z}[u]$ and
  the syzygy module $\Syz( 2,u,u+2)$. As $\Syz( 2,u)=\gen{(u,-2)}$,
  $\Syz( 2,u+2)=\gen{(u+2,-2)}$, and $\Syz( u+2,u)=\gen{(-u,u+2)}$,
  we conclude that if $s=(s_1,s_2,s_3) \in \Syz( 2,u,u+2)$ can be
  written as a $\mathbb{Z}[u]$-linear combination of syzygies in
  $\Syz( 2,u,u+2)$ with at most $2$ nonzero components, then it has
  entries $s_i$ in $\gen{2,u}$. The syzygy
  $(1,1,-1) \in \Syz(2,u,u+2)$ does not satisfy this property.
\end{example}
\begin{example}
  Consider the ring $\R= \mathbb{Z}[u] + v \, \mathbb{Q}(u)[v]_{(v)}$. 
  It is coherent by \citet[Theorem~3, since
  {${\rm q.f.}(\mathbb{Z}[u])=\mathbb{Q}(u)$} and {$\mathbb{Z}[u]$} is coherent]{dobbspapick76} but
  nonnoetherian \citep[since {$\mathbb{Z}[u]$} is not a field, see][§~17, Exercise~14]{gilmer72}. As in Example~\ref{2-comp-1}, $(1,1,-1) \in \Syz_{\R}(2,u,u+2)$ cannot be
  written as an $\R$-linear combination of syzygies in~$\Syz_{\R}( 2,u,u+2)$ with
  at most $2$ nonzero components (suppose so and  take $v=0$).
\end{example}


\section{Syzygies in a polynomial ring over a discrete coherent ring}\label{seccrucial}

\begin{definition}[syzygies of terms, Discrete coherent context~\ref{discohcontext}]
  \label{defisyz-terms0}
  Let $p\geq1$ and $\Pp=\sotq{E}{\emptyset\neq E\subseteq \lrb{1\twodots p}}$ be the set of nonempty subsets of the set of indices $\lrb{1\twodots p}=\{1,\dots,p\}$.
  Consider $M_1=M'_1e_{i_1}$, \dots, $M_p=M'_pe_{i_p}$ monomials
  in~$\Hm$, $a_1,\dots,a_p \in \R$.  Let
  $\Pap(M_1,\dots,M_p)\subseteq\Pp$ be the subset of those~$E$ which
  are \emph{position level sets} of $(M_1,\dots,M_p)$, i.e.\ such that
  $i_{j}=i_{j'}$ for $j,j'\in E$. Note that all singletons belong to $\Pap(M_1,\dots,M_p)$. For each position level set~$E$ of $(M_1,\dots,M_p)$, let
  $s^E_{1},\dots,s^E_{\ell^E}$ be a finite number of generators
  of~$\Syz((a_j)_{j \in E})$ as given by a certificate of coherence
  for~$\R$; here $s^E_{i}=(s^E_{i,j})_{j\in E}$. Let
  $M^E=\lcm(M_j\mathrel;j \in E)$ and
  \({S}^E(a_1M_1,\dots,a_pM_p)\) be the list \(S^E_{1},\dots,S^E_{\ell^E}\),
  where $S^E_{i}=(S^E_{i,1},\dots,S^E_{i,p})$ with
  \[
    S^E_{i,j}=\begin{cases}
      {s}^E_{i,j}\ M^E/M_j&\text{if $j \in E$,}\\
      0&\text{otherwise.}
    \end{cases}
  \]
  This is a syzygy for $(a_1M_1,\dots,a_pM_p)$: see
  Equation~\eqref{SEisyzygy} below. Finally, let
  $S(a_1M_1,\dots,a_pM_p)$ be the concatenation of all the lists
  $S^E(a_1M_1,\dots,a_pM_p)$ when $E$ ranges over the position level sets~$E$ of $(M_1,\dots,M_p)$.
\end{definition}

\begin{example}\label{exZ8Z} Let $T_1=(2X^2Y,\,0)=2M_1$, $T_2=(XY^2,\,0)=M_2$, $T_3=(0,\,4X)=4M_3$ in $(\mathbb{Z}/8\mathbb{Z})[X,Y]^2$. We have
  $\Pap(M_1,M_2,M_3)=\left\{ \{1\},\, \{2\},\,\{3\},\,\{1,2\} \right\}$, $S_1^{\{1\}}=(4,0,0)$, $S_1^{\{2\}}=(0,0,0)$, $S_1^{\{3\}}=(0,0,2)$,
  $S_1^{\{1,2\}}=(Y,6X,0)$, and $\ell^{\{1\}}=\ell^{\{2\}}=\ell^{\{3\}}=\ell^{\{1,2\}}=1$.
\end{example}

\begin{definition}[leading monomial of summands and their leading monomial index set]
  Let $f_1,\dots,f_p \in \Hm$ not all zero with $\LC(f_j)=a_j$
  and $\LM(f_j)=M_j$.  Let $g_1,\dots,g_p\in\RuX$ with
  $\LC (g_j)=b_j$ and $\LM (g_j)=N_j$.  The \emph{leading monomial of
    the summands of~$g_1f_1+\dots+g_pf_p$}
  is the monomial $\aLM=\aLM(\ug)=\sup_{j\in\lrb{1\twodots p}} N_jM_j$,\footnote{Note that this is not in general the leading monomial of the vector that is the sum of the expression.} and their \emph{leading monomial index set} is $E=\sotq{j} {N_jM_j=\aLM}$.
\end{definition}
  
Propositions~\ref{syz-terms} and~\ref{syz-terms-2} as well as Fundamental theorem~\ref{Grob-Buch} generalise the method of \citealt*[Theorem 4.5]{GLNY2020}.

\begin{proposition}[Discrete \coh context~\ref{discohcontext}]\label{syz-terms}
  The finite
  list \(S(a_1M_1,\dots,a_pM_p)\)
  generates the syzygy module \(\Syz(a_1M_1,\dots,a_pM_p)\subseteq \RuX^p\).
  \end{proposition}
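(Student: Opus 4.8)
The plan is to show that an arbitrary syzygy $(h_1,\dots,h_p)\in\Syz(a_1M_1,\dots,a_pM_p)$, with $h_j\in\RuX$, lies in the submodule generated by the list $S(a_1M_1,\dots,a_pM_p)$. The natural induction is on the leading monomial $\aLM=\aLM(h_1,\dots,h_p)=\sup_j\LM(h_j)M_j$ of the summands, with respect to the monomial order on $\RuX$ (or rather on $\Hm$, applied positionwise). This is a well-founded induction because a monomial order is a well-order on $\mathbb{M}_n$ by Dickson's lemma—which holds constructively in the form needed here—so there is no infinite strictly decreasing sequence of values of $\aLM$.

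First I would set up the induction step. Fix a syzygy $(h_1,\dots,h_p)$, not all $h_j=0$, and let $E=\{j:\LM(h_j)M_j=\aLM\}$ be the leading monomial index set. Since $M_j=M'_je_{i_j}$, all the monomials $M_j$ for $j\in E$ share the same position $i_{j}$, so $E$ is a position level set of $(M_1,\dots,M_p)$, hence belongs to $\Pap(M_1,\dots,M_p)$; this is the key observation that makes the set $S^E$ available. Writing $b_j=\LC(h_j)$ for $j\in E$, the fact that $(h_1,\dots,h_p)$ is a syzygy forces the top-degree part to vanish: comparing the coefficients of the monomial $\aLM$ (which as an element of $\Hm$ equals $\uX^{\gamma}e_{i}$ where $\uX^\gamma$ is the $\RuX$-monomial of $\aLM$ and $i$ the common position for $j\in E$) gives $\sum_{j\in E}b_j a_j=0$, i.e.\ $(b_j)_{j\in E}\in\Syz((a_j)_{j\in E})$. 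By the certificate of coherence for $\R$, we can write $(b_j)_{j\in E}=\sum_{i=1}^{\ell^E}\lambda_i s^E_i$ with $\lambda_i\in\R$ and $s^E_i=(s^E_{i,j})_{j\in E}$ the chosen generators.

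Now I would subtract an explicit combination of the $S^E_i$ to lower $\aLM$. Let $\uX^{\delta_j}=\aLM/M_j$ for $j\in E$ (this makes sense because $M_j$ divides $\aLM$ in $\mathbb{M}_n^m$ for $j\in E$; note $M^E=\lcm(M_j:j\in E)$ divides $\aLM$, so $M^E/M_j$ divides $\uX^{\delta_j}$, and set $\uX^{\epsilon}=\aLM/M^E$). Consider
\[
  (h_1,\dots,h_p)-\uX^{\epsilon}\sum_{i=1}^{\ell^E}\lambda_i S^E_i .
\]
The $j$-th component of $\uX^{\epsilon}\sum_i\lambda_i S^E_i$ is $0$ for $j\notin E$ and is $\bigl(\sum_i\lambda_i s^E_{i,j}\bigr)\uX^{\epsilon}(M^E/M_j)=b_j\uX^{\delta_j}$ for $j\in E$. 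Thus the subtraction cancels, for each $j\in E$, exactly the leading term $b_j\uX^{\delta_j}$ of $h_j$—which is precisely the term contributing to the coefficient of $\aLM$—and adds nothing for $j\notin E$. Hence the resulting tuple is again a syzygy (the $S^E_i$ being syzygies, cf.\ Equation~\eqref{SEisyzygy}), and its leading monomial of summands is strictly smaller than $\aLM$: either some component drops in degree, or the index set shrinks. By the induction hypothesis this smaller syzygy is in $\gen{S(a_1M_1,\dots,a_pM_p)}$, and therefore so is $(h_1,\dots,h_p)$. The base case is the zero syzygy, which is trivial.

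The main obstacle I anticipate is purely bookkeeping: matching the monomial exponents so that $\uX^{\epsilon}\,S^E_{i,j}$ really produces the term $b_j\,(\aLM/M_j)$, i.e.\ verifying $\uX^{\epsilon}(M^E/M_j)=\aLM/M_j$ for $j\in E$ and that multiplying a syzygy $S^E_i$ by the monomial $\uX^{\epsilon}$ yields a syzygy (immediate from $\RuX$-linearity of $\Syz$). A secondary point requiring care is the constructive justification of the well-founded induction on $\aLM$: one should phrase it as an induction on the monomial $\sup_j\LM(h_j)M_j$ together with, at fixed $\aLM$, a secondary induction on the cardinality of the index set $E$, so that the whole argument is an explicit algorithm terminating by Dickson's lemma rather than an appeal to classical well-ordering.
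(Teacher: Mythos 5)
Your proposal is correct and follows essentially the same route as the paper's proof: identify the leading monomial index set $E$ (a position level set), use coherence to express $(b_j)_{j\in E}$ in terms of the generators $s^E_i$, subtract $\frac{\aLM}{M^E}\sum_i \lambda_i S^E_i$ to strip off the leading terms of the $h_j$ for $j\in E$, and conclude by well-ordering of monomials. (One small simplification: since the subtraction removes the leading term of \emph{every} $h_j$ with $j\in E$ simultaneously, the supremum $\aLM$ strictly decreases in one step, so your secondary induction on $\#E$ is not needed.)
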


\begin{proof}
  Let us use the notation of Definition~\ref{defisyz-terms0}. We first check that each $S^E_{i}$ is a syzygy for \((a_1M_1,\dots,a_pM_p)\):
  \begin{equation}
    S^E_{i,1}a_1M_1+\dots+S^E_{i,p}a_pM_p=\som_{j\in E}s^E_{i,j}a_jM^E=\Bigl(\som_{j\in E}s^E_{i,j}a_j\Bigr)M^E=0\text.\label{SEisyzygy}
  \end{equation}
  Conversely, let $\ug=(g_1,\dots,g_p) \in \Syz(a_1M_1,\dots,a_pM_p)$,  not all $g_j$ zero, let $\aLM=\aLM(\ug)$ be the leading monomial of the summands of $g_1a_1M_1+\dots+g_pa_pM_p$,
  and let $E$ be their leading monomial index set.
  We have
  $\som_{j\in E}b_j a_j =0$, and thus $(b_j)_{j \in E}=c_1 s^E_{1}+ \cdots+ c_{\ell^E} s^E_{\ell^E}$ for some $c_1,\dots,c_{\ell^E} \in \R$. Let
  \[\ugprime=(g'_1,\dots,g'_p)=\ug- \frac{\aLM}{M^E} \som_{i=1}^{\ell^E} c_i S^E_{i} \in \Syz(a_1M_1,\dots,a_pM_p)\text;\]
  note that $M^E$ divides $\aLM$ because every $M_j$, $j\in E$, does. We have $g'_j=g_j$ for $j \notin E$ and, for
  $j\in E$,
  \[
    \begin{multlined}
      g'_j= g_j-\frac{\aLM}{M^E} \sum_{i=1}^{\ell^E} c_i S^E_{i,j}= g_j-\frac{\aLM}{M^E} \sum_{i=1}^{\ell^E} c_i
      \frac{M^E}{M_j}{s}^E_{i,j}\\
      = g_j-\frac{\aLM}{M_j} \sum_{i=1}^{\ell^E} c_i {s}^E_{i,j}=
      g_j-\frac{\aLM}{M_j} b_j= g_j- \LT(g_j)\text.
    \end{multlined}
  \]

  \n Thus $\aLM(\ugprime)< \aLM(\ug)$. Reiterating this (with $\ugprime$ instead of $\ug$), we reach the desired result after a finite number of steps since the set of monomials is
  well ordered.
\end{proof}

\begin{example}[Example~\ref{exZ8Z} continued]\label{exZ8Zbis} In $(\mathbb{Z}/8\mathbb{Z})[X,Y]^2$, we have
  \[ \Syz(T_1, T_2, T_3)=\gen{(4,0,0),(0,0,2),(Y,6X,0)}\text.\]
\end{example}

Following in detail the first step in the preceding proof we get the following proposition.

\begin{proposition}[notation of Definition~\ref{defisyz-terms0}, Discrete \coh context~\ref{discohcontext}]\label{syz-terms-2}
  Let
  \[
    u=\som_{j \in\lrb{1\twodots p}}g_jf_j\text,\enskip\aLM=\sup\nolimits_{j \in\lrb{1\twodots p}}N_jM_j\text,\enskip\text{and}\enskip E=\sotq{j\in\lrb{1\twodots p}}{N_jM_j=\aLM}\text.
  \]
  If  $\LM(u) <\aLM$,
  then $f_{p+1},\dots,f_{p+\ell^E}  \in \Hm$ and $g_{p+1},\dots,g_{p+\ell^E}\in \RuX$ defined by
  \[f_{p+i}=  \som_{j \in E}   {S}^E_{i,j}\, f_j\text{ and }g_{p+i}=c_i \frac{\aLM}{M^E}\]
  are such that
\[u=\som_{j \in E}(g_j-\LT(g_j))f_j+\som_{j \in\lrb{1\twodots p+\ell^E}\setminus E}g_jf_j\]
is an expression for~$u$ whose summands have leading monomial $< \aLM$.
\end{proposition}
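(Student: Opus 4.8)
The plan is to check the two assertions separately: that the displayed right-hand side is genuinely an expression for~$u$, and that each of its summands has leading monomial strictly below~$\aLM$. The argument is a rerun of the first reduction step in the proof of Proposition~\ref{syz-terms}, with the only substitute for the hypothesis ``$\ug$~is a syzygy'' used there being the weaker hypothesis $\LM(u)<\aLM$.

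First I would record that the construction is legitimate, i.e.\ that $E\in\Pap(M_1,\dots,M_p)$ and that $(b_j)_{j\in E}\in\Syz((a_j)_{j\in E})$. For the former: if $j\in E$ then $N_jM_j=\aLM$, and since $N_j\in\mathbb{M}_n$ does not affect the position, $M_j$ has the same position as~$\aLM$ for every $j\in E$; hence $E$ is a nonempty position level set of $(M_1,\dots,M_p)$, so the generators $s^E_1,\dots,s^E_{\ell^E}$ of $\Syz((a_j)_{j\in E})$ and the list $S^E$ of Definition~\ref{defisyz-terms0} are available. For the latter, I would compare the coefficients of the monomial~$\aLM$ on the two sides of $u=\som_{j\in\lrb{1\twodots p}}g_jf_j$: for each~$j$, every monomial occurring in~$g_jf_j$ is $\leq N_jM_j$, with the coefficient of~$N_jM_j$ equal to $\LC(g_j)\LC(f_j)=b_ja_j$; since $N_jM_j<\aLM$ for $j\notin E$ while $N_jM_j=\aLM$ for $j\in E$, the coefficient of~$\aLM$ in~$\som_jg_jf_j$ equals $\som_{j\in E}b_ja_j$, whereas it is~$0$ in~$u$ by the hypothesis $\LM(u)<\aLM$. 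Thus $\som_{j\in E}b_ja_j=0$, and the expansion $(b_j)_{j\in E}=\som_{i=1}^{\ell^E}c_is^E_i$ defining the~$c_i$, hence the~$g_{p+i}$, makes sense.

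To check the equality I would regroup the claimed expression as $\som_{j\in E}(g_j-\LT(g_j))f_j+\som_{j\in\lrb{1\twodots p}\setminus E}g_jf_j+\som_{i=1}^{\ell^E}g_{p+i}f_{p+i}$: the first two sums give $u-\som_{j\in E}\LT(g_j)f_j=u-\som_{j\in E}b_jN_jf_j$, while for the third I would use that $M_j\divides M^E\divides\aLM$ for $j\in E$ (so that $\aLM/M_j=N_j$) together with $S^E_{i,j}=s^E_{i,j}\,M^E/M_j$ to get $g_{p+i}f_{p+i}=c_i\frac{\aLM}{M^E}\som_{j\in E}S^E_{i,j}f_j=c_i\som_{j\in E}s^E_{i,j}N_jf_j$, and then $\som_ig_{p+i}f_{p+i}=\som_{j\in E}(\som_ic_is^E_{i,j})N_jf_j=\som_{j\in E}b_jN_jf_j$. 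Adding the three pieces returns~$u$.

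Finally I would bound the leading monomials. For $j\in E$: either $g_j=\LT(g_j)$ and the summand is~$0$, or $\LM(g_j-\LT(g_j))<N_j$ and then every monomial of $(g_j-\LT(g_j))f_j$ is $<N_jM_j=\aLM$ by compatibility of the order with multiplication. For $j\in\lrb{1\twodots p}\setminus E$: every monomial of~$g_jf_j$ is $\leq N_jM_j<\aLM$. For $1\leq i\leq\ell^E$: since $s^E_i\in\Syz((a_j)_{j\in E})$, the coefficient of the monomial~$M^E$ in $f_{p+i}=\som_{j\in E}\frac{M^E}{M_j}s^E_{i,j}f_j$ is $\som_{j\in E}s^E_{i,j}a_j=0$, whence $\LM(f_{p+i})<M^E$ and every monomial of $g_{p+i}f_{p+i}=c_i\frac{\aLM}{M^E}f_{p+i}$ is $<\aLM$. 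I do not expect a genuine obstacle: the only points needing care are the position bookkeeping (ensuring $\aLM$, $M^E$, and the~$M_j$ with $j\in E$ all sit at one common position, so that $E\in\Pap(M_1,\dots,M_p)$ and the coefficient comparisons are meaningful) and the degenerate cases---some $g_j$, $g_j-\LT(g_j)$, $c_i$, or $f_{p+i}$ equal to~$0$---in which the corresponding summand is~$0$, with leading monomial~$0<\aLM$.
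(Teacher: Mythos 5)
Your proposal is correct and follows essentially the same route as the paper's proof: extract $\som_{j\in E}b_ja_j=0$ from the vanishing of the coefficient of~$\aLM$, identify $\som_{j\in E}\LT(g_j)f_j$ with $\som_{i}g_{p+i}f_{p+i}$ via the definition of the~$S^E_{i,j}$, and bound the new summands by observing that $\LM(f_{p+i})<M^E$ because the $s^E_i$ are syzygies of the~$a_j$. Your write-up merely makes explicit some bookkeeping (that $E$ is a position level set, that $M^E\divides\aLM$, and the degenerate zero cases) that the paper leaves implicit or defers to the proof of Proposition~\ref{syz-terms}.
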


\begin{proof}
  As $\LM(u) <\aLM$, the coefficient of~$\aLM$ in $\som_{j\in E}g_jf_j$ vanishes, so that $\som_{ j \in E}b_ja_j=0$: we have
  \[
    \begin{aligned}
      \som_{j \in E}\LT(g_j)f_j
      = \som_{j \in E}\som_{1\leq i \leq \ell^E} c_i {s}^E_{i,j} N_jf_j
      &= \som_{1\leq i \leq \ell^E} c_i\som_{j \in E} {S}^E_{i,j} \frac{M_jN_j}{M^E}\,f_j\\
      &=\som_{1\leq i \leq \ell^E}g_{p+i} f_{p+i}
    \end{aligned}
  \]
  with
  \[
    \LM(g_{p+i})\LM(f_{p+i}) \leq \frac {\aLM}{M^E} \LM\Big(\som_{j \in E}   {S}^E_{i,j}\, f_j\Big) < \frac{\aLM}{M^E}M^E=\aLM\text.\qedhere
  \]
\end{proof}

\subsection{Syzygies of terms, examples in the case of an ideal}

This case is Discrete \coh context~\ref{discohcontext} with $m=1$; every subset of~$\Pp$ is a position level set.

\begin{example}Let us consider the following syzygy of~\((6XY^2,15X^2YZ,10Z^2)\) in $\mathbb{Z}[X,Y,Z]$:
  \[
    \ug=(g_1,g_2,g_3)=(5XZ+10Z^2, -2Y+2Z, -3X^2Y-6XY^2)\text.
  \]
  Following the algorithm given in the
  proof of Proposition~\ref{syz-terms} and considering the graded monomial
  lexicographic order with $X>Y>Z$, we have
  $\aLM(\ug)=\aLM=X^2Y^2Z$, $E=\{1,2\}$,
  $\Syz(6,15)=\gen{s^E_{1}=\frac{1}{3}(-15,6)=(-5,2)}$,
  $\ell^E=1$, $M^E=X^2Y^2Z$,
  $S^E_{1}=\bigl(-5 \frac{X^2Y^2Z}{XY^2},\, 2
  \frac{X^2Y^2Z}{X^2YZ},\,0\bigr)=(-5XZ,\,2Y,\,0)$,
  $(b_1,b_2)=(5,-2)=(-1)\cdot s^E_{1}$, \(c_1=-1\),
  $\ugprime=(g'_1,g'_2,g'_3)=\ug- \frac{\aLM}{M^E}
  \sum_{i=1}^{\ell^E} c_i S^E_{i}={\ug}+\frac{X^2Y^2Z}{X^2Y^2Z} S^E_{1}= \ug+(-5XZ,\,
  2Y,\, 0)=(10Z^2,\, 2Z,\, -3X^2Y-6XY^2)=(g_1- \LT (g_1),\,g_2- \LT
  (g_2),\, g_3)$, with
  $\aLM(\ugprime)=\aLM'=X^2YZ^2 < \aLM(\ug)$.

  \smallskip \n Continuing with $\ugprime$, we obtain
  $E'=\{2,3\}$,
  $\Syz(15,10)=\gen{s^{E'}_{1}=\allowbreak \frac{1}{5}(-10,15)=\allowbreak(-2,3)}$,
  $\ell_{E'}=1$, $M_{E'}=X^2YZ^2$,
  $S^{E'}_{1}=\bigl(0, -2 \frac{X^2YZ^2}{X^2YZ},\, 3
  \frac{X^2YZ^2}{Z^2}\bigr)=(0,\,-2Z,\,3X^2Y)$,
  $(b_2',b_3')=(2,-3)=(-1)\cdot s^{E'}_{1}$,
  $\ugseconde=\ugprime- \frac{\aLM'}{M_{E'}}
  \sum_{i=1}^{\ell_{E'}} c_i' S^{E'}_{i}=\ugprime+\frac{X^2YZ^2}{X^2YZ^2} S^{E'}_{1}= \ugprime+(0,\,
  -2Z,\, 3X^2Y)=(10Z^2,\, 0,\, -6XY^2)=(g'_1, g'_2- \LT (g'_2),\,g'_3-
  \LT (g'_3))=-2 S^{\{1,3\}}_{1}$. We conclude that
  \[ \ug=-S^{\{1,2\}}_{1}-S^{\{2,3\}}_{1}-2 S^{\{1,3\}}_{1}\text.\]
\end{example}

\begin{example}
  Let us consider the following syzygy of~\((3XY,3Y,X)\) in~\(\mathbb{Z}[X,Y]\):
  \[
    \ug=(g_1,g_2,g_3)=(2X+Y,\, -3X^2+2XY,\, 3XY-9Y^2)\text.
  \]
  Following the
  algorithm given in the proof of Proposition~\ref{syz-terms} and
  considering the lexicographic monomial order with $X>Y$, we have
  $\aLM(\ug)=\aLM=X^2Y$, $E=\{1,2,3\}$,
  $\Syz(3,3,1)=\gen{s^E_{1}=(-1,1,0),\, s^E_{2}=(-1,0,3)}$,
  $\ell^E=2$, $M^E=XY$, $S^E_{1}=(-1,X,0),\, S^E_{2}=(-1,0,3Y)$,
  $(b_1,b_2,b_3)=(2,-3,3)=-3s^E_{1} + s^E_{2}$, $(c_1,c_2)=(-3,1)$,
  $\ugprime=(g'_1,g'_2,g'_3)=\ug- \frac{\aLM}{M^E}
  \sum_{i=1}^{\ell^E} c_i S^E_{i}=\ug-\frac{X^2Y}{XY}
  (-3S^E_{1}+ S^E_{2})=\ug-X(2,-3X,3Y)=(Y,\, 2XY,\,
  -9Y^2)=({g_1- \LT (g_1)},{g_2- \LT (g_2)}, {g_3- \LT (g_3)})= 2Y
  S^E_{1}-3Y S^E_{2} $, with
  $\aLM(\ugprime)=XY^2 < \aLM(\ug)$.  We conclude that
  \[ \ug=(-3X+2Y)S^{\{1,2,3\}}_{1}+(X-3Y) S^{\{1,2,3\}}_{2}\text.\]
\end{example}

\subsection{S-lists and iterated S-lists, a fundamental theorem}

\begin{definition}[Discrete \coh context~\ref{discohcontext}]\label{defdef2}
  Let \(f_1,\dots,f_p\in\Hm\setminus\{0\}\) and consider their leading terms \(a_1M_1,\dots,a_pM_p\in\Hm\).
  If $S_1,\dots, S_{\ell}$ is the list of generators of $\Syz(\LT(f_1,\dots,f_p))$ computed in Proposition~\ref{syz-terms}, the  \emph{S-list} of $f_1,\dots,f_p$ is the list
  \[\St(f_1,\dots,f_p)=S_{1,1}f_1+\dots+S_{1,p}f_p,\dots,S_{\ell,1}f_1+\dots+S_{\ell,p}f_p\]
  after having deleted the vanishing items.
  By induction, we define the \emph{iterated S-lists} by
  \begin{itemize}
  \item $\St^0(f_1,\dots,f_p)=f_1,\dots,f_p$;
  \item  $\St^{q+1}(f_1,\dots,f_p)$ is the concatenation of $\St^{q}(f_1,\dots,f_p)$ with $\St(\St^{q}(f_1,\dots,f_p))$.
  \end{itemize}
  Note that each item of an iterated S-list is in~$\gen{f_1,\dots,f_p}$.
\end{definition}

\begin{remark}\label{remdefdef2}  If $\R$ is a Bézout ring then for any $a_1,\dots,a_q \in \R$ there exists a finite generating set for $\Syz( a_1,\dots,a_q)$  whose
  vectors have at most two nonzero components (see Proposition~\ref{lemmsyz0}). Choose this generating set of syzygies in $\R^q$. It follows that in the corresponding iterated S-lists of $f_1,\dots,f_p$ there are only S-pairs ($\# E=2$)  and auto-S-polynomials ($\# E=1$), as expected.
  Similarly, if $\R$ is a Prüfer domain 
  (e.g.\ $\R=\{f\in\QQ[X]\mathrel;f(\ZZ)\subseteq\ZZ\}$, which has Krull dimension equal to~$2$, see \citealt{zbMATH06938583,Duc2O15}),  
  then for any $a_1,\dots,a_q \in \R$ there exists a finite generating set for $\Syz( a_1,\dots,a_q)$  whose
  vectors have at most two nonzero components: in fact, a Prüfer domain is  locally a valuation domain (thus, locally a Bézout domain). So in the corresponding iterated S-lists of $f_1,\dots,f_p$ there are only S-pairs (the auto-S-polynomials vanish since the ring $\R$ is supposed to be integral).
  \eoe
\end{remark}

\begin{maintheorem}[Discrete \coh context~\ref{discohcontext}]
  \label{Grob-Buch}
  Let \(f_1,\dots,f_p\in\Hm\setminus\{0\}\). For any \(u\in\gen{f_1,\dots,f_p}\) there exist
  $q \in \mathbb{N}$ and items~$p_1,\dots,p_{t}$ in the list $\St^{q}(f_1,\dots,f_p)$
  such that $ \LT(u)\in \gen{\LT(p_1,\dots,p_{t})}$.  In other
  words,
  \[ \MLT(\gen{f_1,\dots,f_p})= \mathop{\bigcup\big\uparrow}_{q \in \mathbb{N}}\, \gen{\LT(\St^q(f_1,\dots,f_p))}\text.\]
\end{maintheorem}

\begin{comment} \label{comGrob-Buch}
  Compared to Theorem 4.2.8 of \citet{AL}, who suppose that the base ring $\R$ is strongly discrete, coherent, and  noetherian, 
  our Theorem~\ref{Grob-Buch} supposes only that $\R$ is discrete and coherent. Moreover, we do not perform divisions. This could be useful when one tries 
  to prove results on the structure of the leading terms ideals (see \citealp{BY1,GuY,Ye6,Ye7} and the recent solution of the Gröbner ring conjecture in \citealp{yengui24}). However  Theorem~\ref{Grob-Buch} does not give a termination condition when 
  one knows that the leading terms ideal is finitely generated (such a condition is given in Theorem~\ref{Grob-Buch-algo}). Our Theorem~\ref{Grob-Buch} is also useful when the leading terms ideal is not
  finitely generated (see Example~\ref{ex}~\eqref{ex1} and the counterexample given in \citealp{Ye6}).
  \eoe
\end{comment}

\begin{proof}
  Write
  \begin{equation}\label{eqlospol}
    u=\som_{j=1}^p g_j  f_j  \hbox{ with }  N_j=\LM (g_j)\hbox{  and } M_j=\LM (f_j)\text.
  \end{equation}
  So $\LM (u)\leq \sup_{1\leq j \leq p}(N_jM_j)=:\aLM$ (the
  leading monomial of the summands of~$u$ in~\eqref{eqlospol}).

  \n Case 1. $\LM (u)= \aLM$.  Clearly  $\LT (u) \in   \gen{\LT(f_{1},\dots,f_{p})}$.

  \noindent  Case 2.  $\LM (u) <\aLM$. Let $E=\{\,j\mathrel;N_jM_j=\aLM\,\}$. By virtue of Proposition~\ref{syz-terms-2},  we obtain another expression for $u$,
  \begin{equation}
    \label{eq:1}
    u=\som_{j \in E}(g_j-\LT(g_j))f_j+\som_{j \in\lrb{1\twodots p+\ell^E}\setminus E}g_jf_j
  \end{equation}
  with the $f_j$'s in $\St^1(f_1,\dots,f_p)$ and the $g_j$'s in $\RuX$, and the leading monomial of the summands of~$u$ in~\eqref{eq:1} is $ < \aLM$. Reiterating this, we end up with a situation like that of Case 1  because the set of monomials is
  well ordered. So we reach the desired result after a finite number of steps.
  \end{proof}

\begin{remark}
  In the proof of Fundamental theorem~\ref{Grob-Buch} with
  $m=1$, if the considered monomial order refines total degree
  (i.e.\ if $M > N$ whenever $\tdeg(M) > \tdeg(N)$), then, letting $d=
  \max_{1 \leq j \leq p} \bigl( \tdeg(g_{j})+\tdeg(f_j)\bigr)
  $ and $\delta=\tdeg(u)$ (assumed $\geq 1$), we have $q \leq \binom{n+d}{d}- \binom{ n+\delta-1}{\delta-1}$ (the number of monomials in
  $X_1,\dots,X_n$ of total degree at least $\delta$ and at most $d$).
  \eoe
\end{remark}

\begin{example}
  \label{ex}
  Let $\V$ be a nonarchimedean valuation domain, i.e.\ a valuation
  domain $\V$ such that there exist nonunits $a,b \in \V$ with $a^q$
  dividing $b$ for every $ q \in \mathbb{N}$.
  \begin{enumerate}
  \item\label{ex1} Let $f_1=aX+1,\,f_2=b\in\V[X]$.  Then
    $\MLT (\gen{f_1,f_2})$ is not finitely generated \citep[see][Example
    253]{Y5}: $\LT(\St^q(f_1,f_2))=\geN{aX,b,\frac{b}{a},\dots,\frac{b}{a^q}}$ and
    $\MLT(\gen{f_1,f_2})=\geN{aX,b,\frac{b}{a},\frac{b}{a^2},\dots}$.
  \item Let $f_{1}=a^2+aXY,\,f_{2}=bY^2\in\V[X,Y]$. We have
    \[
      \begin{multlined}
        \geN{\LT (\St^0(f_{1},f_{2})}=aY\geN{X,\frac{b}{a}Y} \subsetneq
        \geN{\LT (\St^1(f_{1},f_{2})}=aY\geN{X,\frac{b}{a}Y,b}\\
        \subsetneq
        \geN{\LT (\St^2(f_{1},f_{2})}
        = \gen{aXY, bY^2,abY,a^2b}= \MLT(\gen{f_{1},f_{2}})\text.
      \end{multlined}
    \]
  \end{enumerate}
\end{example}

\begin{corollary}[Discrete \coh context~\ref{discohcontext}]
  \label{Grob-Buch22}
  Let \(I = \gen{f_1,\dots,f_p}\) be a nonzero
  finitely generated submodule of~$\Hm$. Suppose that $\MLT(I)$ is finitely generated,
  i.e.\ that there exist $u_{1},\dots,u_{t} \in I$ such that
  $\MLT(I)=\gen{\LT(u_{1},\dots,u_{t}) }$. Then there exists
  $q \in \mathbb{N}$ such that $ \MLT(I)= \gen{\LT(\St^q(f_1,\dots,f_p))}$.
\end{corollary}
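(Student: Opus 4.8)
The plan is to read this off Fundamental theorem~\ref{Grob-Buch}, which exhibits $\MLT(I)$ as the increasing union $\bigcup\uparrow_{q\in\mathbb{N}}\gen{\LT(\St^q(f_1,\dots,f_p))}$. The one preliminary point to record is that the family $\bigl(\gen{\LT(\St^q(f_1,\dots,f_p))}\bigr)_{q\in\mathbb{N}}$ is genuinely nondecreasing: by Definition~\ref{defdef2}, $\St^{q+1}(f_1,\dots,f_p)$ is the concatenation of $\St^q(f_1,\dots,f_p)$ with $\St(\St^q(f_1,\dots,f_p))$, so $\LT(\St^q(f_1,\dots,f_p))$ is a sublist of $\LT(\St^{q+1}(f_1,\dots,f_p))$ and hence $\gen{\LT(\St^q(f_1,\dots,f_p))}\subseteq\gen{\LT(\St^{q+1}(f_1,\dots,f_p))}$.

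Now I would exploit the hypothesis. We are handed $u_1,\dots,u_t\in I$ with $\MLT(I)=\gen{\LT(u_1),\dots,\LT(u_t)}$. Applying Fundamental theorem~\ref{Grob-Buch} to each $u_k$ in turn produces an index $q_k\in\mathbb{N}$ with $\LT(u_k)\in\gen{\LT(\St^{q_k}(f_1,\dots,f_p))}$. Setting $q=\max(q_1,\dots,q_t)$ and using the monotonicity just noted, we get $\LT(u_k)\in\gen{\LT(\St^q(f_1,\dots,f_p))}$ for every $k$, so that $\MLT(I)=\gen{\LT(u_1),\dots,\LT(u_t)}\subseteq\gen{\LT(\St^q(f_1,\dots,f_p))}$. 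The reverse inclusion is the trivial half: every item of $\St^q(f_1,\dots,f_p)$ lies in $I=\gen{f_1,\dots,f_p}$ (recorded in Definition~\ref{defdef2}), hence its leading term lies in $\MLT(I)$ by definition of the module of leading terms; therefore $\gen{\LT(\St^q(f_1,\dots,f_p))}\subseteq\MLT(I)$. Combining the two inclusions gives $\MLT(I)=\gen{\LT(\St^q(f_1,\dots,f_p))}$.

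There is no real obstacle here — this is a compactness-type consequence of the Fundamental theorem — so the only care needed is bookkeeping: making sure the chain $\bigl(\gen{\LT(\St^q)}\bigr)_q$ is nondecreasing, so that a single $q=\max_k q_k$ serves all $t$ generators simultaneously, and not forgetting to supply the (immediate) reverse inclusion. Constructively nothing new enters: the $u_k$ are given by the hypothesis, and Fundamental theorem~\ref{Grob-Buch} is already an explicit procedure that outputs each $q_k$.
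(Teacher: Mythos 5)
Your proof is correct and is exactly the argument the paper intends: the corollary is stated without an explicit proof precisely because it follows from Fundamental theorem~\ref{Grob-Buch} by applying it to each $u_k$, taking $q=\max_k q_k$ via the monotonicity of the chain $\bigl(\gen{\LT(\St^q(f_1,\dots,f_p))}\bigr)_q$, and noting the trivial reverse inclusion. Nothing to add.
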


\begin{remark} In Corollary~\ref{Grob-Buch22} with $m=1$, if the
  considered monomial order refines total degree, then, writing
  $u_{k}=\som_{j=1}^p g_{k,j} f_j$ with $g_{k,j} \in \RuX$ and
  letting
  $d= \max_{1 \leq k\leq t, 1 \leq j \leq p}
  (\tdeg(g_{k,j})+\tdeg(f_j)) $ and
  $\delta=\min_{1 \leq k\leq t} \tdeg(u_{k})$ (assumed to be $\geq 1$), we
  have $q \leq \binom{n+d}{d}- \binom{ n+\delta-1}{\delta-1}$.
  \eoe
\end{remark}

\section{Basic algorithms}\label{sec:basic-algorithms}

\subsection{The division algorithm}\label{sec:division-algorithms}

This algorithm in Simple division context~\ref{simpledivicontext} needs $\R$ to be strongly discrete; note  that coherence is not used here. Like the classical division algorithm for $\F[\uX]^m$ with $\F$ a discrete field \citep[see][Algorithm~211]{Y5}, this algorithm has the following goal.
\[
  \begin{tabular}{rl}
    \lstinline|Input|&$u\in\Hm$, $h_{1},\dots,h_{p} \in \Hm\setminus \so 0$.\\
    \lstinline|Output|&$q_1,\dots,q_p \in \RuX$ and $r \in \Hm$ such that\\
                     &$\left\{\begin{aligned}
                         &u=q_1h_{1}+\cdots+q_ph_{p}+r\text,\\
                         &\LM (u)\geq \LM (q_j) \LM (h_{j})\text{ whenever $q_j\ne0$,}\\
                         &T\notin \gen{\LT(h_{1},\dots,h_{p})} \text{ for each term~$T$ of $r$.}
                       \end{aligned}\right.$
  \end{tabular}
\]

\begin{definition} \label{notaremainder}
  The vector $r$ is called \emph{a remainder of \(u\) on division by the list~\(H= \lst{h_{1},\dots,h_{p}}\)} and is denoted by $r= \overline{u}^H$.
\end{definition}

Algorithm~\ref{gendivalg1} provides a suitable answer: a suitable remainder $r$ and suitable quotients~$q_j$. Nevertheless, there are a priori many different possible answers.

\begin{divalgorithm}[\Sdic~\ref{simpledivicontext}]\label{gendivalg1} 
  \leavevmode
\begin{lstlisting}
Division($u,h_{1},\dots,h_{p}$)
local variables $j:\lrb{1\twodots p}$, $D:\text{subset of }\lrb{1\twodots p}$,
                $c,c_1,\dots,c_p,a_1,\dots,a_p:\R$, $M,M_1,\dots,M_p:\Hm$;
$r\gets0$;
for $j$ from $1$ to $p$ do $q_j\gets0$; $M_j\gets \LM(h_{j})$; $c_j\gets \LC(h_{j})$ od;
while $u\ne0$ do
  $M\gets \LM (u)$; $c\gets \LC(u)$; $D\gets\sotq{j}{M_j \divides M}$;
  if $c\in \gen{c_j;j\in D}$  then
    find $(a_j)_{j\in D}$ such that $\sum_{j\in D}a_jc_j=c$;
    $u\gets u-\som_{j\in D} a_j(M/M_j) h_{j}$;
    for $j\in D$ do $q_j\gets q_j+a_j(M/M_j)$ od
  else $r \gets r +  cM$; $u\gets u-cM$ fi
od;
return $r,q_1,\dots,q_p$
\end{lstlisting}
\end{divalgorithm}

One checks by induction that $\LM(q_j)\LM(h_{j})\leq \LM(u)$ and $u=q_1h_{1}+\cdots+q_ph_{p}+r$.

\subsection{Syzygy algorithms}\label{sec:syzterms-algor}

\begin{notation} \label{notaSeq}
  We denote by $\List(A)$ the set of (finite) lists of elements of $A$.
\end{notation}

These algorithms take place in Discrete \coh context~\ref{discohcontext}.
They are a key tool for constructing a Gröbner
basis and have been introduced by \citet{Buchbe0} for the case
where the base ring is a discrete field.

\smallskip We begin with the basic syzygy algorithm returning $S^E(a_1M_1,\dots,a_pM_p)=S^E_{1},\dots,S^E_{\ell}$ for \(a_1M_1,\dots,a_pM_p\in \Hm\) and a subset $E\subseteq \Pap(M_1,\dots,M_p)$: see Definition~\ref{defisyz-terms0}. Let us recall that  $\Hn^1=\RuX$.
\[\makebox[\textwidth]{%
    \begin{tabular}{rl}
      \lstinline|Input|&$a_1M_1,\dots,a_pM_p$ terms in $ \Hm$, $E\subseteq \Pap(M_1,\dots,M_p)$.\\
      \lstinline|Output|& A list of syzygies \((S_{1,j}^E)^{\vphantom{E}}_{j\in\lrb{1\twodots p}},\dots,(S_{\ell,j}^E)^{\vphantom{E}}_{j\in\lrb{1\twodots p}}\)
                          for $(a_1M_1,\dots,a_pM_p)$\\
                       &such that $S^E_{i,j}=0$ for $j\notin E$ and,
                         for every syzygy expression\\
                       & $g_1a_1M_1+\dots+g_pa_pM_p$ whose summands have leading monomial\\
                       &index set~$E$, $(\LT(g_j))_{j\in E}\in\gen{(S^E_{1,j})^{\vphantom{E}}_{j\in E},\dots,(S^E_{\ell,j})^{\vphantom{E}}_{j\in E}}$.
    \end{tabular}}
\]

\begin{basicsyzalgorithm}[basic syzygies of terms, Definition~\ref{defisyz-terms0}, Discrete coherent context~\ref{discohcontext}]\label{genSyzBasic}
  \leavevmode
\begin{lstlisting}
BasicSyzygiesOfTerms($a_1M_1,\dots,a_pM_p,E$)
local variables $j:\lrb{1\twodots p}$, $\ell,i:\NN$, $s_1,\dots,s_\ell:\R^E$, $M^E:\Hm$;
find $\ell,s_{1},\dots,s_{\ell}$ such that $\Syz((a_j)_{j \in E})=\gen{s_{1},\dots,s_{\ell}}$;
for $i$ from $1$ to $\ell$ do
  for $j$ from $1$ to $p$ do
    $M^E\gets \lcm(M_j\mathrel; j \in E) $;
    if $j\in E$ then $S_{i,j}^E\gets s_{i,j}(M^E/M_j)$ else $S_{i,j}^E\gets 0$ fi
  od
od;
return $(S_{1,j}^E)^{\vphantom{E}}_{j\in\lrb{1\twodots p}},\dots,(S_{\ell,j}^E)^{\vphantom{E}}_{j\in\lrb{1\twodots p}}$
\end{lstlisting}
\end{basicsyzalgorithm}

\smallskip We now give an algorithm whose goal is to provide a generating set of syzygies for a vector of terms in $\Hm$; see Definition~\ref{defisyz-terms0} and Proposition~\ref{syz-terms}.

\[\makebox[\textwidth]{%
    \begin{tabular}{rl}
      \lstinline|Input|&$a_1M_1=a_1M'_1e_{i_1},\dots,a_pM_p=a_pM'_pe_{i_p}$ terms in $ \Hm$.\\
      \lstinline|Output|&a list of syzygies $S^E_{i}\in\RuX^p$   \\
                         &such that the  $S^E_{i}$'s generate $\Syz(a_1M_1,\dots,a_pM_p)$.
    \end{tabular}}
\]

In the algorithm, we construct the syzygies  $S^E_{i}$ by successive concatenations of the lists obtained by the previous algorithm.

\begin{syzalgorithm}[syzygies of terms, see Definition~\ref{defisyz-terms0}, Discrete coherent context~\ref{discohcontext}]\label{genSalg2}
  \leavevmode
\begin{lstlisting}
SyzygiesOfTerms($a_1M_1,\dots,a_pM_p$)
local variables  $E:\text{subset of }\lrb{1\twodots p}$,  $S^E: \List(\RuX^p)$;
$S\gets $;
for $E$ in $\Pap(M_1,\dots,M_p)$ do
  $S^E \gets$ BasicSyzygiesOfTerms($a_1M_1,\dots,a_pM_p,E$);
  $S\gets S , S^E$
od;
return $S$
\end{lstlisting}
\end{syzalgorithm}

In the case of an ideal ($m=1$), one may forget about the basis vectors $e_{i_1},\dots,e_{i_p}$ and one has $\Pap(M_1,\dots,M_p)=\Pp$.

\subsection{S-list algorithms}\label{sec:slists-algor}

We have the following goal
corresponding to the S-list $\St(f_1,\dots,f_p)$ in Definition~\ref{defdef2} (Discrete \coh context~\ref{discohcontext}).
\[\makebox[\textwidth]{%
    \begin{tabular}{rl}
      \lstinline|Input| & $f_1,\dots,f_p \in\Hm$ not all zero,\\
      \lstinline|Output|&The S-list $\St=\St(f_1,\dots,f_p)$ of $f_1,\dots,f_p$ as in Definition~\ref{defdef2}.
    \end{tabular}}
\]

\begin{slistalgorithm}[S-list algorithm, Definition~\ref{defdef2}, Discrete \coh context~\ref{discohcontext}]\label{Slist1}

  \leavevmode
\begin{lstlisting}
Slist($f_1,\dots,f_p$)
local variables $Si,Ss:\RuX^p$; $S,Slist: \List(\RuX^p)$;
$\St \gets$;
$S\gets$ SyzygiesOfTerms($\LT(f_1,\dots,f_p)$);
for $Si$ in $S$ do
  $Ss\gets Si_{1}f_1+\dots+Si_{p}f_p$;
  if $Ss\neq 0$ then $\St\gets \St,Ss$ fi
od;
return $\St$
\end{lstlisting}
\end{slistalgorithm}

We have the following goal
corresponding to the S-list $\St^q(f_1,\dots,f_p)$ in Definition~\ref{defdef2} (Discrete \coh context~\ref{discohcontext}.)
\[\makebox[\textwidth]{%
    \begin{tabular}{rl}
      \lstinline|Input| & $q\in \NN$, $f_1,\dots,f_p \in\Hm$ not all zero,\\
      \lstinline|Output|&The iterated S-list $\St^q=\St^q(f_1,\dots,f_p)$ of $f_1,\dots,f_p$ as in Definition~\ref{defdef2}.
    \end{tabular}}
\]
\begin{slistalgorithm}[iterated S-list algorithm, Definition~\ref{defdef2}, Discrete \coh context~\ref{discohcontext}]\label{Slist2}\nopagebreak

  \nopagebreak\leavevmode\nopagebreak
\begin{lstlisting}
IteratedSlist($q,f_1,\dots,f_p$)
local variables $r:\NN$;
$\St^q\gets f_1,\dots,f_p$;
for $r$ from $1$ to $q$ do $\St^q\gets \St^q,$Slist($\St^q$) od;
return $\St^q$
\end{lstlisting}
\end{slistalgorithm}

\subsection{Rewriting algorithms}\label{sec:rewrite-algor}

The next algorithm corresponds to Proposition~\ref{syz-terms-2}.
\[\makebox[\textwidth]{%
    \begin{tabular}{rl}
      \lstinline|Input| & $f_1,\dots,f_p \in\Hm$ not all zero; $g_1,\dots,g_p$  $\in \RuX$; 
      \\  & the leading monomial~$\aLM$ of the summands of $\sum_{j=1}^p  g_jf_j=u$\\
                         &(let $E$ be the corresponding leading monomial index set) is $>\LT(u)$.\\
      \lstinline|Output|&$f_{1},\dots,f_{p+\ell}$ in $\St(f_1,\dots,f_p)$ extending $f_1,\dots,f_p$; $g_{1},\dots,g_{p+\ell}\in \RuX$ with\\
                        &the original $g_j$'s replaced by \(g_j-\LT(g_j)\) for $j\in E$ and unchanged outside $E$;\\
                        &$u=\som_{j=1}^{p+\ell} g_jf_j$  and the leading monomial of its summands is   $< \aLM$.
    \end{tabular}}
\]

\begin{rewritealgorithm}[rewriting a linear combination, Proposition~\ref{syz-terms-2}, Discrete \coh context~\ref{discohcontext}]\label{rewrite3}

  \leavevmode
\begin{lstlisting}
Rewriting($(g_1,f_1),\dots,(g_p,f_p)$)
local variables $j:\lrb{1\twodots p}$, $\ell,i:\NN$,  $E:\text{subset of }\lrb{1\twodots p}$, $N_1,\dots,N_p:\RuX$,
                $M_1,\dots,M_p,\aLM, M^E:\Hm$, $a_1,\dots,a_p,b_1,\dots,b_p,c_1,\dots,c_\ell:\R$, 
                $s_1,\dots,s_\ell:\R^E$, $(S_{1,j}^E)_{j\in\lrb{1\twodots p}},\dots,(S_{\ell,j}^E)_{j\in\lrb{1\twodots p}}:\R[\uX]^p$;
for $j$ in $\lrb{1\twodots p}$ do
  $a_j\gets \LC(f_j)$; $M_j\gets \LM(f_j)$; $b_j\gets \LC(g_j)$; $N_j\gets \LM(g_j)$ od;
$\aLM\gets\sup\sotq{N_jM_j}{j\in\lrb{1\twodots p}}$; $E\gets\sotq{j\in\lrb{1\twodots p}}{N_jM_j=\aLM}$;
$M^E\gets\lcm(M_j\mathrel;j \in E)$;
$(S_{1,j}^E)^{\vphantom{E}}_{j\in\lrb{1\twodots p}},\dots,(S_{\ell,j}^E)^{\vphantom{E}}_{j\in\lrb{1\twodots p}}\gets$ BasicSyzygiesOfTerms($a_1M_1,\dots,a_pM_p,E$);
for $i$ in $\lrb{1\twodots \ell}$ do $s_i\gets (\LC(S_{i,j}^E))^{\vphantom{E}}_{j\in E}$ od;
find $(c_i)_{i\in\lrb{1\twodots \ell}}$ such that $(b_j)_{j\in E}=\sum_{i\in\lrb{1\twodots \ell}}c_i s_i$;
for $i$ in $\lrb{1\twodots \ell}$ do $g_{p+i}\gets c_i \frac \aLM{M^E}$; $f_{p+i}\gets  \som_{j \in E}   {S}_{i,j}^E f_j$ od;
for $j$ in $E$ do $g_j\gets g_j-\LT(g_j)$ od;
return $(g_1,f_1),\dots,(g_{p+\ell},f_{p+\ell})$
\end{lstlisting}
\end{rewritealgorithm}

We have the following goal
corresponding to Fundamental theorem~\ref{Grob-Buch} in Discrete \coh context~\ref{discohcontext}. This is an iteration of the previous one with a counter~$q$ for the number of iterations.
\[\makebox[\textwidth]{%
    \begin{tabular}{rl}
      \lstinline|Input|& $f_1,\dots,f_p \in \Hm\setminus\{0\}$, $g_1,\dots,g_p$  $\in \RuX$; we let $u=\sum_{j=1}^pg_jf_j$.\\
      \lstinline|Output|& $q \in\NN$ and $f_{1},\dots,f_{t}$ in $\St^q(f_1,\dots,f_p)$ extending $f_1,\dots,f_p$ such that\\
                       &$\LT(u)\in \gen{\LT(f_{1},\dots,f_{t})}$.
    \end{tabular}}
\]
\begin{rewritealgorithm}[iterated rewriting of a linear combination, Fundamental theorem~\ref{Grob-Buch}, Discrete \coh context~\ref{discohcontext}]\label{rewrite4}

  \leavevmode
\begin{lstlisting}
IteratedRewriting($(g_1,f_1),\dots,(g_p,f_p)$)
$q\gets 0$;
while $\LM(g_1f_1+\dots+g_pf_p)<\sup\{\LM(g_1)\LM(f_1),\dots,\LM(g_p)\LM(f_p)\}$ do
  $(g_1,f_1),\dots,(g_{p+\ell},f_{p+\ell})\gets$ Rewriting($(g_1,f_1),\dots,(g_p,f_p)$);
  $p\gets p+\ell$; $q\gets q+1$ od;
return $q,f_1,\dots,f_p$
\end{lstlisting}
\end{rewritealgorithm}

\section{Buchberger's algorithm}\label{sec:buchb-algor}

The proof of the following theorem parallels exactly the proof of the analogue Theorem~4.2.3 of \citet{AL}.

\begin{maintheorem}[\Sdcc~\ref{sdccontext}]
  \label{Grob-Buch-algo}
  \leavevmode
  \begin{enumerate}
  \item \emph{Buchberger's criterion}.
    Let \(f_1,\dots,f_p\in\Hm\) not all
    zero, and denote by \( S_1,\dots, S_{\ell} \)   the generators of \( \Syz(\LT(f_1,\dots,f_p))\)  computed in Proposition~\ref{syz-terms}. Then
    \(G=f_1,\dots,f_p\) is a Gröbner basis for  \(\gen{f_1,\dots,f_p}\) if  and only if for every \( 1 \leq i \leq \ell \), we have
    \[\overline{ S_{i,1}f_1+\dots+S_{i,p}f_p }^G=0.
    \]
  \item \emph{Buchberger's algorithm works}. Let $f_1,\dots,f_p \in \Hm\setminus \so0$ and $M=\gen{f_1,\dots,f_p}$. If the module of leading terms~$\MLT(M)$ of the module~$M$ is finitely generated, then the (generalised) Buchberger algorithm~\ref{genBuchbergeralg} computes a Gröbner basis for  \(\gen{f_1,\dots,f_p}\).
  \end{enumerate}
\end{maintheorem}

\smallskip
The (generalised) Buchberger algorithm has the following goal.
\[
  \begin{tabular}{rl}
    \lstinline|Input|&$f_1,\dots,f_p \in \Hm\setminus \so0$.\\
    \lstinline|Output|&a Gröbner basis $\lst{f_1,\dots,f_p,\dots,f_t}$ for $\gen{f_1,\dots,f_p}$ extending $\lst{f_1,\dots,f_p}$.
  \end{tabular}
\]

\begin{buchbalgorithm}[\Sdcc~\ref{sdccontext}]\label{genBuchbergeralg}
  \leavevmode

  \vbox{%
\begin{lstlisting}
Buchberger($f_1,\dots,f_p$)
local variables $\St: \List(\Hm)$; $f,r:\Hm$; $L:\List(\RuX)$;
$G\gets f_1,\dots,f_p$;
repeat
  $\St\gets$Slist($G$);
  for $f$ in $\St$ do
    remove $f$ from $\St$;
    $r, L\gets$ Division($f,G$);
    if $r\neq0$ then $\St\gets\St,r$ fi
  od;
  if $\St\neq \emptyset$ then $G\gets G,\St$ fi
until $\St= \emptyset$;
return $G$
\end{lstlisting}%
  }
\end{buchbalgorithm}

\begin{remark} \label{remBuchb1}
  If the algorithm terminates, then we can transform the obtained  Gröbner basis into a Gröbner basis $h_{1},\dots,h_{p'}$ such that no term of an element~$ h_{\ell}$ lies in $\gentq{\LT(h_{k})}{k\ne\ell}$ by replacing each element of the Gröbner basis with a remainder of it on division by the other nonzero elements and
  by repeating this process until it stabilises.  Such a Gröbner basis is called a \emph{pseudo-reduced} Gröbner basis. The terminology of ``reduced'' Gröbner basis is used only in the case where a way of normalising its elements is specified: see \citealt[Remark~239]{Y5}.
  \eoe
\end{remark}

\section{Schreyer's syzygy algorithm}\label{sec:Syzygy-algorithm}

\begin{definition}[\foreignlanguage{german}{Schreyer}'s monomial order]\label{defiGrob}
  Let $\R$ be a discrete ring. Consider a list $G=\lst{f_{1},\dots,f_{p}}$ in $\Hm\setminus \so0$ and the finitely generated submodule $U= \gen{G}=\R[\uX] f_{1}+\cdots+\R[\uX] f_{p}$  of $\Hm$.
  Let $(\epsilon_{1},\dots,\epsilon_{p})$ be the canonical basis of~$\R[\uX]^{p}$. \emph{\foreignlanguage{german}{Schreyer}'s monomial order induced by~\(>\) and \(G\)} on~$\R[\uX]^{p}$ is the order~\(>_G\) defined as follows:
  \[
    \uX^{\alpha}\epsilon_{k}>_G\uX^{\beta}\epsilon_{j}\quad\text{if}\quad\left|\,
      \begin{aligned}
        \text{either }&\LM (\uX^{\alpha} f_{k}) > \LM (\uX^{\beta} f_{j})\\
        \text{or both }&\LM (\uX^{\alpha} f_{k}) = \LM (\uX^{\beta} f_{j}) \text{ and $k<j$.}
      \end{aligned}\right.
  \]
\end{definition}

\foreignlanguage{german}{Schreyer}'s monomial order is defined on~$\R[\uX]^{p}$ in the same way as when~$\R$ is a discrete field \citep[see][p.~66]{EH}. Note that it actually depends only on
the leading monomials~\(\LM(G)\) of~\(G\).


Now we shall follow closely the ingenious proof by \citet{Sc} of Hilbert's syzygy theorem via Gröbner bases, but with  a strongly discrete coherent ring instead of a field. \foreignlanguage{german}{Schreyer}'s proof is very well explained in~\citealt[\S\S~4.4.1--4.4.3]{EH}.

Schreyer's syzygy algorithm below takes also place in \Sdcc~\ref{sdccontext} for $\R$.
It has the following goal.
\[
  \begin{tabular}{rl}
    \lstinline|Input|&a Gröbner basis $\lst{f_1,\dots,f_p}$ for a submodule of~$\Hm$.\\
   \lstinline|Output|
   &a Gröbner basis~$(u^E_i)_{1\leq i\leq\ell^E,E\in\Pap(\LM(f_1),\dots,\LM(f_p))}$ for $\Syz(f_1,\dots, f_p)$\\
         &with respect to Schreyer's monomial order induced by~$>$ and
         \(\lst{f_1,\dots,f_p}\).
 \end{tabular}
\]

\begin{Syzalgorithm}[\Sdcc~\ref{sdccontext}]\label{genSyzygyalg}%
  \leavevmode
\begin{lstlisting}
SchreyerSyzygy($f_1,\dots,f_p$)
local variables $(S_i^E)_{1\leq i\leq\ell^E,E\in\Pap(\LM(f_1),\dots,\LM(f_p))}:\List(\RuX^p)$,
                $q_1,\dots,q_p:\RuX$;
$(S_i^E)_{1\leq i\leq\ell^E,E\in\Pap(\LM(f_1),\dots,\LM(f_p))}\gets$ SyzygiesOfTerms($f_1,\dots,f_p$);
for $E$ in $\Pap(\LM(f_1),\dots,\LM(f_p))$ do
  for $i$ from $1$ to $l^E$ do
    $0,q_1,\dots,q_p\gets$Division($S^E_{i,1}f_1+\dots+S^E_{i,p}f_p,f_1,\dots,f_p$) (*@\label{genSyzygyalg:16}@*)
      (*@\textrm{(note that $\LM(S^E_{i,1}f_1+\dots+S^E_{i,p}f_p) \geq \LM(q_j f_j)$ whenever $q_j f_j\ne0)$}@*); (*@\label{genSyzygyalg:18}@*)
    $u^E_i\gets S^E_{i,1}\epsilon_1+\dots+S^E_{i,p}\epsilon_p-q_1\epsilon_1-\dots-q_p\epsilon_p$
  od       
od;(*@\label{gensyzygyalg:19}@*)
return $(u_i^E)_{1\leq i\leq\ell^E,E\in\Pap(\LM(f_1),\dots,\LM(f_p))}$
\end{lstlisting}
\end{Syzalgorithm}

Note that the polynomials $q_1,\dots,q_p$ of lines~\ref{genSyzygyalg:16}--\ref{genSyzygyalg:18} may have been computed while constructing the Gröbner basis.

\begin{remark} \label{remSyzgen}
For an arbitrary system of generators $\lst{h_{1},\dots,h_{p}}$ for a  submodule $U$ of~$\Hm$, the syzygy module of $\lst{h_{1},\dots,h_{p}}$ is easily obtained from the syzygy module of a Gröbner basis for $U$ \citep[see][Theorem~296]{Y5}.
\eoe
\end{remark}

\begin{maintheorem}[Schreyer's algorithm, Strongly discrete coherent context~\ref{sdccontext}]\label{schcoherent}
  Let \(U\) be a submodule of \(\Hm\) with Gröbner basis \(\lst{f_1,\dots,f_p}\).  Then  the relations~\(u^E_{i}\)  computed by \foreignlanguage{german}{Schreyer}'s syzygy algorithm~\ref{genSyzygyalg} form a Gröbner basis for the syzygy module \(\Syz(f_1,\dots, f_p)\) with respect to \foreignlanguage{german}{Schreyer}'s monomial order induced by~\(>\) and \(\lst{f_1,\dots,f_p}\). Moreover, for $E$ a position level subset of $\lrb{1\twodots p}$ and $1 \leq i \leq \ell^E$,
\begin{equation}\label{sch}
\LT (u^E_{i})=   {s}^E_{i,r}\ M^E/M_r \; \epsilon_{r} \hbox{ with } r = \min \sotq{j \in E}{{s}^E_{i,j} \neq  0}
\end{equation}
in the notation of Definition~\ref{defisyz-terms0} with \(M_1=\LM(f_1)\), \dots, \(M_p=\LM(f_p)\).
\end{maintheorem}

\begin{proof}[Proof \textup{(a slight modification of the proof of~\citealt*[Theorem~4.16]{EH})}] Let us use the notation of \foreignlanguage{german}{Schreyer}'s sy\-zy\-gy algorithm~\ref{genSyzygyalg}. Recall that $u^E_{i}=(S^E_{i,1}-q_1)\epsilon_1+\dots+(S^E_{i,p}-q_p)\epsilon_p$, $S^E_{i,1}f_1+\dots+S^E_{i,p}f_p=q_{1}f_{1}+\dots+q_{p}f_{p}$,
  and $\LM(q_j f_j) \leq \LM(S^E_{i,1}f_1+\dots+S^E_{i,p}f_p) < (M^E/M_k)M_k  = M^E$ for any $k \in E$. So
$\LT (u^E_{i})= \LT (S^E_{i})=  {s}^E_{i,r}\ M^E/M_r \; \epsilon_{r} \hbox{ where } r = \min \sotq{j \in E}{{s}^E_{i,j} \neq  0}$.

\smallskip

\n Let us show now that the relations $u^E_{i}$ form a Gröbner basis for the syzygy module $\Syz(f_1,\dots, f_p)$. For this, let $v = v_1 \epsilon_1+\dots+v_p \epsilon_p \in \Syz(f_1,\dots, f_p)$ and let us show that  $\LT (v) \in \gentq{\LT (u^E_{i})}{1\leq i\leq\ell^E,E\in\Pap(M_1,\dots,M_p)}$.
Let us write $\LM (v_j \epsilon_j)=N_j \epsilon_j$ and $\LC (v_j \epsilon_j)=c_j$ for $1 \leq j \leq p$. Then $\LM (v)= N_{k} \epsilon_{k}$ for some $1 \leq k \leq p$. Now let $v' = \sum_{j\in D} c_j  N_j\epsilon_j$, where $D$ is the set of those $j$ for which $N_jM_j= N_{k} M_{k}$. By definition of \foreignlanguage{german}{Schreyer}'s monomial order, we have $j \geq k$ for all $j \in D$. Substituting each~$\epsilon_j$ in $v'$ by $T_j=\LT ({f_j})$, the sum becomes zero. Therefore $v'$ is a syzygy of the terms~$T_j$ with $j \in D$. By virtue of Proposition~\ref{syz-terms}, $v'$ is a linear combination of elements in \(S((T_j)_{j \in D}) \) of the form~$S^E_{i}$ with $E \subseteq D$ and $1 \leq i \leq \ell^E$. By inspecting the $j$th component of $v'$, we deduce that there exist $w_1,\dots,w_t \in \RuX$,  position level subsets $E_1,\dots,E_t$ of $D$
with $j \in E_1 \cap \cdots \cap E_t$, nonnegative integers $1 \leq i_1 \leq \ell_{E_1},\dots,1 \leq i_t \leq \ell_{E_t} $, such that
$c_jN_j= w_1 s_{E_1,i_1,j} M_{E_1}/M_j + \cdots+ w_t s_{E_t,i_t,j} M_{E_t}/M_j$, and $s_{E_1,i_1,j},\dots,s_{E_t,i_t,j} \neq 0$. As  $j > k$ for all $j \in D\setminus\{k\}$, it follows that $\LT (v') \in \gen{\LT(S_{E_1,i_1},\dots,S_{E_t,i_t})}$. The desired result follows since $\LT (v)=\LT (v')$.
\end{proof}

Schreyer's monomial order is a tailor-made term over position monomial order which changes
at each iteration, i.e.\ after each computation of a Gröbner basis of the syzygy module of the considered Gröbner basis. Schreyer's trick is, for $v = v_1 \epsilon_1+\dots+v_p \epsilon_p\in \Syz(f_1,\dots, f_p)$,
to prioritise (by deciding that they are greater) the
$\LM(v_j \epsilon_j)$ such that $\LM(v_j f_j)= \max (\LM(v_1 f_1),\dots, \LM(v_p f_p))$, and to order the obtained generators $u_i^E$ of $\Syz(f_1,\dots, f_p)$ in such a way that $X_n$ does not
appear in the leading terms of the $u_i^E$ (when computing a Gröbner basis for the first syzygy module), and to iterate this process until exhausting all the indeterminates $X_n,X_{n-1},\dots,X_1$ from the leading terms of the Gröbner basis of the
syzygy module. Once we reach this situation,
we continue the resolution over the base ring~$\R$.

As a consequence of Theorem~\ref{schcoherent}, we obtain the following constructive version of Hilbert's syzygy theorem for a strongly discrete coherent ring.

\begin{theorem}[Hilbert's syzygy theorem, Strongly discrete coherent context~\ref{sdccontext}]\label{hilbcoherent}
  \leavevmode
Let \(\Hm\) be a free \(\RuX\)-module with basis \((e_1,\dots,e_m)\), and \(>\) a monomial order on \(\Hm\). Let \(U\) be a finitely generated submodule of \(\Hm\) such that $\MLT(U)$ is finitely generated with respect to some monomial order. Then  \(M=\Hm/U\)  admits
an \(\RuX\)-resolution

\[
  0 \rightarrow F_q/V \rightarrow F_{q-1} \rightarrow \cdots \rightarrow F_1 \rightarrow F_0 \rightarrow M \rightarrow 0
\]

\n such that  \(q \leq n+1\), $F_0,\dots,F_q$ are finitely generated free \(\RuX\)-modules, and $V$ is generated by finitely many iterated syzygies whose leading terms with respect to Schreyer's induced monomial order do not depend on  the indeterminates  $X_n,\dots,X_1$.

\end{theorem}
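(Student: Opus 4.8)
The plan is to build the resolution iteratively by repeated application of \foreignlanguage{german}{Schreyer}'s syzygy algorithm~\ref{genSyzygyalg}, exactly as in the classical Hilbert syzygy theorem, and to control the length of the resolution by tracking which indeterminates survive in the leading terms. First I would observe that, since $U$ is finitely generated and $\MLT(U)$ is finitely generated with respect to some monomial order, Fundamental theorem~\ref{Grob-Buch-algo} tells us that Buchberger's algorithm~\ref{genBuchbergeralg} terminates and produces a Gröbner basis $\lst{f_1,\dots,f_p}$ for $U$ with respect to that order. Set $F_0=\Hm$, let $\varphi_0\colon F_0\to M=\Hm/U$ be the quotient map, and let $F_1=\RuX^p$ map onto $U=\ker\varphi_0$ via $\epsilon_j\mapsto f_j$; this is surjective because $\lst{f_1,\dots,f_p}$ generates $U$. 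Then $\ker(F_1\to F_0)=\Syz(f_1,\dots,f_p)$, which by Theorem~\ref{schcoherent} has an explicit finite Gröbner basis $(u^E_i)$ with respect to \foreignlanguage{german}{Schreyer}'s induced monomial order $>_G$, whose leading terms are given by~\eqref{sch}, namely $\LT(u^E_i)=s^E_{i,r}\,(M^E/M_r)\,\epsilon_r$ with $r=\min\{j\in E\colon s^E_{i,j}\neq0\}$.

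The key structural point, which I would make next, is the classical Schreyer observation about these leading terms: whenever the original leading monomials $M_1=\LM(f_1),\dots,M_p=\LM(f_p)$ are such that the variable $X_n$ does not occur in $M_j/\gcd$-type quotients for the relevant index sets — more precisely, after one pass the leading terms~\eqref{sch} of the new Gröbner basis involve one fewer indeterminate. Concretely, $M^E/M_r$ is $\lcm(M_j;j\in E)/M_r$, and by the structure of the syzygies of monomials (and by choosing, as one may when $\R$ is strongly discrete coherent, the position level sets and the order on the $u^E_i$ appropriately), one arranges that $X_n$ disappears from the $\LT(u^E_i)$. Iterating Schreyer's algorithm then yields a descending chain in which each stage eliminates one further indeterminate from the leading terms. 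After at most $n$ iterations beyond $F_1$ the leading terms of the syzygy Gröbner basis involve no indeterminate at all; equivalently the syzygy module at that stage has a Gröbner basis whose leading terms lie in $\R\,\epsilon_j$'s, so the module $V$ generated by these last iterated syzygies is "over $\R$" in the sense that its module of leading terms does not depend on $X_n,\dots,X_1$. This produces free modules $F_0,F_1,\dots,F_q$ with $q\le n+1$ and $F_q/V$ the last term, giving the stated resolution $0\to F_q/V\to F_{q-1}\to\cdots\to F_0\to M\to0$; exactness at each $F_i$ ($i\ge1$) holds by construction (image $=$ syzygy module $=$ kernel of the previous map), and finite generation of each $F_i$ holds because each Gröbner basis produced is finite, using coherence of $\R$ at each call of \texttt{SyzygiesOfTerms} inside \ref{genSyzygyalg}.

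The main obstacle I anticipate is making precise and rigorous the claim that Schreyer's induced order strictly decreases the number of indeterminates appearing in the leading terms of the successive Gröbner bases — this is the heart of Schreyer's proof and requires a careful bookkeeping argument showing that, for the syzygies $u^E_i$ with leading terms~\eqref{sch}, the quotient monomials $M^E/M_r$ never reintroduce a previously eliminated variable, provided one orders the generators so that the last-occurring variable is pushed out first (this is the content of the remark following the proof of Theorem~\ref{schcoherent} about exhausting $X_n,X_{n-1},\dots,X_1$). One must also verify that at each stage the hypothesis enabling the next application of Schreyer's machinery is preserved: namely that the new list is a Gröbner basis with respect to the new induced order — but this is exactly what Theorem~\ref{schcoherent} guarantees, so no extra termination hypothesis is needed beyond the one finitely-generated-leading-term assumption used to get started, since all subsequent syzygy Gröbner bases are produced directly (not by Buchberger) via~\ref{genSyzygyalg}. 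Finally I would note that the bound $q\le n+1$ rather than $q\le n$ accounts for the initial step $F_0\gets\Hm$ before the first syzygy computation, and that when $\R$ itself has finite global dimension one could continue the resolution over $\R$ to obtain a genuinely finite free resolution, as remarked in the text.
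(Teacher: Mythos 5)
Your overall architecture is exactly the paper's: obtain a finite Gröbner basis for $U$ (possible because $\MLT(U)$ is finitely generated), iterate Schreyer's syzygy algorithm~\ref{genSyzygyalg} using Theorem~\ref{schcoherent} at each stage (so no further termination hypothesis is needed), and bound the length of the resolution by arguing that each pass removes one indeterminate from the leading terms, stopping after at most $n+1$ free modules with a tail module $V$ whose generators have leading terms free of $X_n,\dots,X_1$. So the route is the same.

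However, you explicitly defer the one step that actually carries the theorem — why $X_n$ disappears from the $\LT(u^E_i)$ after a single application of Schreyer's algorithm — calling it ``the main obstacle'' and gesturing at ``choosing the position level sets and the order on the $u^E_i$ appropriately.'' That points at the wrong objects: the position level sets and the list of syzygy generators are dictated by the algorithm, and what must be chosen is the \emph{order of the Gröbner basis elements $f_1,\dots,f_p$ themselves} (equivalently, of the basis vectors $\epsilon_j$ entering Schreyer's induced order). The paper's proof supplies exactly this: reorder the $f_j$'s so that whenever $\LM(f_k)=M'_ke_{i_k}$ and $\LM(f_j)=M'_je_{i_j}$ occupy the same position and $k<j$, then $\deg_{X_n}(M'_k)\geq\deg_{X_n}(M'_j)$. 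Since each $E$ is a position level set, formula~\eqref{sch} gives $\LT(u^E_i)=s^E_{i,r}\,(M^E/M_r)\,\epsilon_r$ with $r$ the \emph{minimal} relevant index in $E$, and the reordering forces $\deg_{X_n}M^E=\deg_{X_n}M_r$, so $X_n$ does not divide $M^E/M_r$. It is this interplay between the descending-$X_n$-degree ordering of the $f_j$'s and the choice of $r$ as a minimum that eliminates $X_n$; without stating it, your argument for $q\le n+1$ is incomplete. The rest of your plan (exactness by construction, finiteness of each $F_i$ from coherence, continuing over $\R$ at the end) matches the paper and is fine.
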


\begin{proof}
Let $ (f_1,\dots,f_p)$ be a  Gröbner basis for $U$ with respect to the considered order. Reorder the $f_j$'s so that  whenever $\LM (f_j)$ and $\LM (f_k)$ involve the same position for
some $k<j$, say $\LM (f_k)=M'_k e_{i_k}$ and  $\LM (f_j)=M'_j e_{i_j}$ with \(i_j=i_k\), then $\deg_{X_n}(M'_k) \geq  \deg_{X_n}(M'_j)$. Consider the leading monomials $\LM(u^E_{i})=M^E/M_r\;\epsilon_{r}$, $r = \min \sotq{j \in E}{{s}^E_{i,j} \neq  0}$, computed by \foreignlanguage{german}{Schreyer}'s syzygy algorithm~\ref{genSyzygyalg}: our reordering entails that \(\deg_{X_n}M_E=\deg_{X_n}M_r\), so that the indeterminate~$X_n$ does not appear in~$\LM(u^E_{i})$. Thus, after at most $n$ computations of the iterated syzygies, we reach the desired situation.
\end{proof}

\begin{remark}
  This theorem generalises Theorems~5.5 and~6.2 of \citealt*{GLNY2020}. Note that their statement there, as well as that of its Theorem~5.9, needs to be amended: ``the TOP lexicographic monomial order'' needs to be replaced by ``some monomial order''; the proof given here shows how their proof needs to be amended. See the arXiv version \citealt*{GLNY2020arxiv}.
\end{remark}

\addcontentsline{toc}{section}{References}
\bibliographystyle{plainnat}
\bibliography{Syzygy-cohfdi}

\end{document}